\newtheorem{definition}{Definition}
\newtheorem{lemma}{Lemma}
\newtheorem{proposition}{Proposition}
\newtheorem{theorem}{Theorem}
\newtheorem{remark}{Remark}
\newtheorem{corollary}{Corollary}
\newcommand{\R}{\mathbb{R}}
\newcommand{\orbit}{\mathcal{O}}
\newcommand{\fcham}{\mathcal{C}}
\renewcommand{\phi}{\varphi}
\newcommand{\transp}{^{\mathrm{T}}}
\newcommand{\bd}{\operatorname{bd}}
\newcommand{\epi}{\operatorname{epi}}
\newcommand{\Proj}{\mathcal{P}}
\DeclareMathOperator*{\Argmin}{Arg\, min}
\DeclareMathOperator{\Sph}{\mathcal{S}}
\DeclareMathOperator{\co}{co}
\DeclareMathOperator{\dist}{d}
\DeclareMathOperator{\cone}{cone}
\DeclareMathOperator{\Perm}{\mathbf{P}}
\title{Orbital Geometry in Optimisation}
\author{Andrew Eberhard\thanks{School of Mathematical and Geospatial Sciences, RMIT University, andy.eb@rmit.edu.au} and Vera Roshchina\thanks{School of Mathematical and Geospatial Sciences, RMIT University, vera.roshchina@rmit.edu.au}}
\begin{document}

\maketitle

\begin{abstract}
We discuss the use of group symmetries in optimisation, in particular with respect to the structure of subdifferential and projection operators. This allows us to generalise a classic result of Adrian Lewis regarding the characterisation of the subdifferential of a permutation invariant convex function to the characterisation of the proximal subdifferential of a Schur convex function that is invariant with respect to a finite reflection group. We are also able to simplify and generalise results on projections onto symmetric sets, in particular, we study projections on sparsity constraints used in sparse signal recovery and compressed sensing.

% These  include sets defined by a sparcity constraint used in sparce signal recovery. 
\end{abstract}

\section{Introduction}
The in depth study of the interplay between group symmetry and nonsmooth optimisation was pioneered by Adrian Lewis, who obtained a range of results in subdifferential calculus under group transformations to problems in eigenvalue optimisation. The original motivation for this direction of research comes from the observation that eigenvalue problems possess inherent symmetry: matrix eigenvalues do not have an intrinsic ordering, and hence such problems have a certain ambiguity. The symmetry that eigenvalues enjoy is permutation invariance, and can be studied via the properties of permutation group and its linear representations, so that the problem can be reduced to working on the quotient space obtained by glueing together the orbits under the action of permutation group. Our goal is to put these results into a more general framework of finite reflection groups, which helps to provide a clean intuition, a greater clarity and simplicity of structure so as to obtain generalisations via simplified proofs, and to  explain some other results from more applied areas such as compresses sensing. 

The aforementioned work by Adrian Lewis \cite{Lewis1996SIAM,Lewis1996,Lewis99} and its generalisations, notably  \cite{Lewis2000, HillGroupsThesis,HillPaper,ArisGeneralisedLewis}, however bold and ground breaking, are not the first nor the only works relating symmetry with nonsmooth optimisation. The 1979 work by Palais \cite{Palais} is foundational in establishing major results in the area, such as the principle of symmetric criticality that allows to deduce the optimality of the unconstrained solution to an optimisation problem from the optimality on an invariant subspace. Recent works \cite{KristalyVargaVarga2007,KobayashiOtani2004} explore these ideas further. We refer the reader to the recent work \cite{BorweinZhuSymmetry} that contains a comprehensive survey of results on the topic. When it comes to applications, symmetry is heavily explored in a range of areas. For instance, in linear and integer programming it is utilised to reduce computational time \cite{liberti-chapter,margot-chapter}. Recently there has been an explosion of research in conic optimisation, and particularly in semidefinite programming that has a strong algebraic flavour and utilise group symmetries to construct new optimisation techniques. To give just a few recent examples, group symmetry is utilised in the seminal work on kissing number problems \cite{Kissing} (see \cite{MathPacking} for a modern treatment of the underlying theory). The role of symmetries in polyhedral lifting problems is studied in \cite{GouveiaLifts,Saunderson}. A range of works in conic optimisation utilise symmetry to reduce the dimension of the problem, e.g. see \cite{SDPVallentin,JuanVeraSym}. Our focus will be to concentrate on the structural insights
into subgradients and projections that can be obtained by group structure. As 
projections are central to basic constructions in nonsmooth analysis (such 
as normal cones to sets) these two topics go hand in glove enabling a path
way to a more general theory.

In this paper we focus on finite reflection groups, as this setting allows enough generality to explain the underlying intuition, yet helps avoid cumbersome technicalities. Many of the results of this paper can be framed within the  context of 
a normal decomposition system as defined in \cite{Lewis1996SIAM}. In \cite{HillGroupsThesis} some related results may be  found and are developed within
the context of reduced Eaton Triples. As noted there and shown in \cite{Niezgoda1998} a reduced Eaton triple corresponds to a finite reflection group
and it is noted in \cite{HillGroupsThesis} that reduced Eaton triples ``almost'' correspond
to a normal decomposition sub-system as defined by Lewis in \cite{Lewis1996SIAM}.
The only possible difference in properties being the attainment of the maximum of 
the inner product taken over an orbit, which may not in general be attained for Eaton Triples. 
In this paper we require this to be achieved 
by the unique elements corresponding to the intersection of the orbit with the fundamental Weyl chamber. 
Consequently we restrict attention to the very natural class of finite reflection groups acting on
a finite dimensional inner product space, departing from the context of 
\cite{HillGroupsThesis,HillPaper}. %For more details see Section 1. 

We begin with the basic notions and results related to finite reflection groups in Section~\ref{sec:groups}, then in Section~\ref{sec:subdifferential} we obtain several technical results concerning the subdifferentials of convex functions invariant under group actions. Section~\ref{sec:projections} is devoted to the study of projections on the convex sets where we obtain a characterisation of projections in terms of group stabilisers. In Section~\ref{sec:normals} we obtain a range of results on the structure of proximal normals to symmetric sets and study the proximal subdifferential of Schur convex functions. We finish with revisiting projections on sparsity constraints in the framework of compressed sensing in Section~\ref{sec:compressed}. 

Throughout the paper, we let $\langle x,y \rangle = x\transp y$, and $\|x\| = \|x\|_2 = \sqrt{x\transp x}$. By $\Sph_{n-1}$ we denote the unit sphere in $\R^n$. By 
$\co C$ and $\cone C$ we denote respectively the convex and conic hulls of set $C$  and the extended real number system $\R_{+\infty} := \R \cup \{+\infty \}$. 

\section{Finite reflection groups}\label{sec:groups}

Any finite Coxeter group has a linear representation generated by a family of reflections about a hyperplane in $\R^n$. We are interested in studying the properties of sets and functions invariant under the action of this group. In what follows we slightly abuse the notation, and often omit the word `action' for the sake of brevity. 
%In two dimensions all finite reflection groups are generated by the reflections through two {\em mirrors} in a plane positioned at an angle $2\pi/n$, where $n$ is a positive integer and are representations of dihedral groups. For example, the symmetry group Dih$_6$ (snowflake symmetry) is generated by a pair of mirrors located at the angle $\pi/6$ (see Fig.~\ref{fig:snowflake}). 
%\begin{figure}[ht]
%\centering
%\includegraphics[scale=0.7]{snowflakeBW}
%\caption{A snowflake and the two mirrors that generate the symmetry group}
%\label{fig:snowflake}
%\end{figure}
Observe that a finite reflection group $G$ is a subgroup of the orthogonal group $O(n)$, and even though $G$ is generated by reflections, not all transformations in $G$ are reflections themselves. For example, the symmetry group of the square $\mathrm{Dih}_4$ has the representation as a finite reflection group generated by two reflections about mirrors positioned at a $\pi/4$ angle. It is not difficult to observe that this group consists of the identity, the four reflections, and three nontrivial rotations (see Fig.~\ref{fig:square_symmetry}).
\begin{figure}[ht]
	\centering
	\includegraphics[scale=0.7]{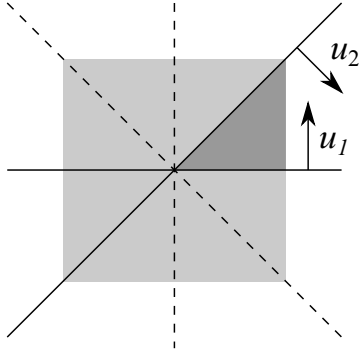}
	\caption{The mirrors of the square symmetry group $\mathrm{Dih}_4$}
	\label{fig:square_symmetry}
\end{figure}

A mirror is a hyperplane in $\R^n$ with associated unit normal $u\in \Sph_{n-1}$; the reflection is represented by the Householder transformation $H_u = I-2uu\transp$, which fixes this hyperplane. We denote the relevant group action by $h_u$, i.e. $h_u x = H_ux$. 

%Note that the linear isometries generated by a set of finite reflections are not all reflections.  We can work this out by direct computation, picking an arbitrary pair of adjacent mirrors (normals), for example, $u_1 = \left(0,1\right)$ and $u_2 = \left(1/\sqrt{2},-1/\sqrt{2}\right)$. The corresponding Householder transformations are 
%$$
%R_1 = I - 2u_1 u_1^T = 
%\begin{bmatrix}
%1 & 0  \\
%0 & -1 \\
%\end{bmatrix}, 
%\qquad 
%R_2 = I - 2u_2 u_2^T = 
%\begin{bmatrix}
%0 & 1  \\
%1 & 0 \\
%\end{bmatrix}.
%$$
%The remaining six transformations can be computed by the direct multiplication of the elementary reflections and checking the completeness of the relevant Cayley table.

Observe that the mirrors in Fig~\ref{fig:square_symmetry} split the space into eight wedges, each is a transformation of any other obtained by consecutive reflections via adjacent sides. Moreover, we can pick a `generating wedge' in an arbitrary way. Higher dimensional representations of finite reflection groups have more complicated structures, but this core observation is still true: the underlying space can always be subdivided into chambers bounded by mirrors (hyperplanes) that play the same role as wedges in the planar groups. We can choose any one of the closed chambers and call it {\em fundamental}. A core result in the theory of finite groups is that any fundamental chamber is also a fundamental domain (see Lemma~\ref{lem:fchamunique}), i.e. it contains a unique representative from each of the orbits (the sets formed by all images of a given point under all group actions). Before we state this and other results related to the structure of the finite reflection groups, we formalise the notation.

Recall that for a group $G$ acting on $\R^n$, and a point $x\in \R^n$, the {\em orbit} of $x$ is the set of all its images under $G$:
$$
\orbit(x) = \{y\in \R^n\,|\, \exists g\in G,\, y = gx\}.
$$
A set $\Phi \subset \Sph_{n-1}$ is a {\em root system} of a finite reflection group $G$ if 
$$
\Phi = \{u\in \Sph_{n-1}\,|\, h_u \in G\},
$$
i.e. it consists of all normals (positive and negative) to all reflection hyperplanes in the group.

A set $U \subset \Phi$ is a positive root system if there exists a linear mapping $f(x):
\R^n \to \R$ with $f(u)>0$ for all $u\in U$, and $f(u)<0$ for all $u\in \Phi\setminus U$. Observe that it is always possible to construct a positive root system, since the group $G$ is finite. 

Given a positive root system $U$ of a group $G$ the closed fundamental chamber $\fcham$ of $G$ is the dual cone of the positive root system 
$$
\fcham = (\cone U)^* = \{x\in \R^n\, |\, \langle x,u\rangle \geq 0 \quad \forall u \in U\}.
$$

Observe that this definition does not contradict the arbitrariness of choice for the fundamental chamber: it is always possible to choose a positive root system for the given selected chamber. The positive root system of the dihedral group $\mathrm{Dih}_4$ shown in Fig.~\ref{fig:square_symmetry} consists of two vectors $u_1$ and $u_2$ orthogonal to the generating mirrors, and the fundamental chamber is the darker wedge that corresponds to the dual cone of the pair $(u_1,u_2)$.

Our development heavily relies on the following well-known result (for the proof see \cite[Theorem (a), page 22]{HumphreysCoxeterGroups}).
\begin{lemma}\label{lem:fchamunique} For any finite reflection group $G$ and any $x\in \R^n$ the set 
$$
\orbit (x)\cap \fcham,
$$
where $\orbit (x)$ is the orbit of $x$, is a singleton. 
\end{lemma}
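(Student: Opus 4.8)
The plan is to establish existence and uniqueness of the element of $\orbit(x)\cap\fcham$ by entirely different means. For existence, fix $x\in\R^n$; since $G$ is finite, so is $\orbit(x)$. Pick $\delta$ in the interior of $\fcham$, that is $\langle\delta,u\rangle>0$ for all $u\in U$; such a $\delta$ exists because $\cone U$ is pointed — the linear functional $f$ defining the positive root system $U$ is strictly positive on $\cone U\setminus\{0\}$ — so the dual cone $\fcham=(\cone U)^*$ is full-dimensional. Let $z$ maximise $y\mapsto\langle y,\delta\rangle$ over the finite set $\orbit(x)$. For every $u\in U$ the point $h_uz=z-2\langle z,u\rangle u$ again lies in $\orbit(x)$, so
$$
\langle z,\delta\rangle\ \geq\ \langle h_uz,\delta\rangle\ =\ \langle z,\delta\rangle-2\langle z,u\rangle\langle u,\delta\rangle,
$$
hence $\langle z,u\rangle\langle u,\delta\rangle\geq0$ and, since $\langle u,\delta\rangle>0$, $\langle z,u\rangle\geq0$. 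As $u\in U$ was arbitrary, $z\in\fcham$, so $\orbit(x)\cap\fcham\neq\emptyset$.

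For uniqueness I would induct on the word length $\ell(g)$ of a group element mapping one fundamental-chamber representative to another. This needs the standard structure theory of finite reflection groups, which I would import from \cite{HumphreysCoxeterGroups}: the simple system $\Delta\subseteq U$ (the extreme rays of $\cone U$), the fact that $G$ is generated by the simple reflections $s_u:=h_u$ with $u\in\Delta$, the induced length function $\ell$, and the key lemma that for $u\in\Delta$ one has $\ell(s_ug)<\ell(g)$ if and only if $g^{-1}u\in\Phi\setminus U=-U$. Granting these, let $\lambda,\mu\in\orbit(x)\cap\fcham$ with $\mu=g\lambda$. If $\ell(g)=0$ then $g$ is the identity and $\lambda=\mu$. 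If $\ell(g)\geq1$, choose $u\in\Delta$ with $\ell(s_ug)<\ell(g)$ — the first letter of a reduced word for $g$ does — so $g^{-1}u\in-U$ and therefore $\langle\lambda,g^{-1}u\rangle\leq0$ because $\lambda\in\fcham$. On the other hand $\langle\lambda,g^{-1}u\rangle=\langle g\lambda,u\rangle=\langle\mu,u\rangle\geq0$ because $\mu\in\fcham$ and $u\in\Delta\subseteq U$. Thus $\langle\mu,u\rangle=0$, so $s_u\mu=\mu$, and then $(s_ug)\lambda=s_u\mu=\mu$ with $\ell(s_ug)<\ell(g)$; the inductive hypothesis applied to $s_ug$ yields $\lambda=\mu$.

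The main obstacle is the uniqueness half, and concretely the length lemma used above: proving it from scratch is tantamount to developing the combinatorics of reduced words in Coxeter groups (the exchange/deletion condition), so I would cite it rather than reprove it. The existence half is elementary and has a clean optimisation reading — $\fcham$ is exactly where a generic linear functional is maximised over an orbit. One can even merge the two halves: taking $\delta$ additionally off the finitely many hyperplanes $\{v:\langle v,p-q\rangle=0\}$, $p\neq q$ in $\orbit(x)$, makes the maximiser $z$ unique, and any $z'\in\orbit(x)\cap\fcham$ then satisfies $\langle z',\delta\rangle\geq\langle z,\delta\rangle$ — because $z=gz'$ for some $g\in G$ and $\langle gz',\delta\rangle=\langle z',g^{-1}\delta\rangle\leq\langle z',\delta\rangle$, the last inequality reducing to $\delta-g^{-1}\delta\in\cone U$, again provable by induction on $\ell(g)$ — so $z'$ also maximises $\langle\cdot,\delta\rangle$ over $\orbit(x)$ and hence $z'=z$.
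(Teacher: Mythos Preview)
The paper does not actually prove this lemma: its entire proof is a citation to \cite[Theorem~(a), p.~22]{HumphreysCoxeterGroups}. Your proposal, by contrast, unpacks the argument. The existence half is a clean, self-contained maximisation argument and is correct as stated; it even has a pleasant optimisation flavour in keeping with the paper. The uniqueness half --- induction on word length via the lemma that $\ell(s_ug)<\ell(g)$ iff $g^{-1}u\in -U$ --- is precisely the standard proof in Humphreys, so you are effectively reproducing the very reference the paper cites rather than offering an independent route. Your alternative ``merged'' argument at the end does not escape this dependence either: the claim $\delta-g^{-1}\delta\in\cone U$ for $\delta\in\fcham$ is itself proved by induction on $\ell(g)$ using the same length lemma (write $g=s_u g'$ with $\ell(g')<\ell(g)$ and use $s_u\delta=\delta-2\langle\delta,u\rangle u$). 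In short, your proof is correct and more informative than the paper's bare citation, but both ultimately rest on the same Coxeter combinatorics from \cite{HumphreysCoxeterGroups}; the genuine addition is your elementary existence argument, which Humphreys also gives but which stands on its own without any word-length machinery.
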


By Lemma~\ref{lem:fchamunique} we can map any point $x\in \R^n$ to its unique intersection $\widecheck x$ with the fundamental chamber, i.e. 
$$
\{\widecheck x\} = C\cap \orbit(x).
$$

We will need one more technical result that concerns group stabilisers. Recall that a {\em stabiliser} (isotropy group, point wise centraliser) $C_G(x)$ of $x\in \R^n$ is the subset of $G$ that fixes $x$, i.e.
$$
C_{G}(x)= \{ h \in G \mid hx=x \}.
$$ 

\begin{lemma}\label{lem:subgrouproots} Let $G$ be a finite reflection group with a positive root system $U$, and assume $C_G(x)$ is a stabiliser of $x\in \R^n$. Then $C_G(x)$ is generated by those reflections that $C_G(x)$ contains. 
\end{lemma}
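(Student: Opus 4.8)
The plan is to prove the nontrivial inclusion $C_G(x) \subseteq G_0$, where $G_0$ denotes the subgroup of $C_G(x)$ generated by the reflections that $C_G(x)$ contains (the reverse inclusion is automatic). First I would reduce to the case $x \in \fcham$. By Lemma~\ref{lem:fchamunique} there is $g \in G$ with $gx = \widecheck x \in \fcham$; since $G \subseteq O(n)$, conjugation by $g$ is an automorphism of $G$ that carries $C_G(x)$ onto $C_G(\widecheck x)$ and each reflection $h_u$ to the reflection $h_{gu}$, hence carries the reflections of $C_G(x)$ bijectively onto those of $C_G(\widecheck x)$ and $G_0$ onto the analogous subgroup of $C_G(\widecheck x)$. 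So it suffices to treat $\widecheck x$, and from now on I assume $x \in \fcham$.

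Two elementary facts will be used repeatedly: since $h_u x = x - 2\langle x, u\rangle u$, a reflection $h_u$ with $u \in \Phi$ fixes $x$ if and only if $\langle x, u\rangle = 0$; and since $x \in \fcham = (\cone U)^*$ while $\Phi \setminus U = -U$, we have $\langle x, u\rangle \ge 0$ for all $u \in U$ and $\langle x, u\rangle \le 0$ for all $u \in \Phi \setminus U$.

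The heart of the proof is an induction on the length function $\ell$ of $G$ relative to a simple system $\Delta \subseteq U$ (such a system and the associated length function being supplied by the standard structure theory of finite reflection groups; see e.g.\ \cite{HumphreysCoxeterGroups}): I claim every $w \in C_G(x)$ lies in $G_0$. If $\ell(w) = 0$, then $w$ is the identity. If $\ell(w) \ge 1$, I would invoke the standard fact that $w$ sends some simple root $\alpha \in \Delta$ to a negative root, with $\ell(w h_\alpha) = \ell(w) - 1$; then, using $wx = x$ and the orthogonality of $w$,
$$
0 \;\le\; \langle x, \alpha\rangle \;=\; \langle wx, w\alpha\rangle \;=\; \langle x, w\alpha\rangle \;\le\; 0,
$$
so $\langle x, \alpha\rangle = 0$, hence $h_\alpha \in C_G(x)$ and, being a reflection of $C_G(x)$, $h_\alpha \in G_0$. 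Then $w' := w h_\alpha$ still fixes $x$ and has $\ell(w') = \ell(w) - 1$, so $w' \in G_0$ by the inductive hypothesis, whence $w = w' h_\alpha \in G_0$.

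The genuinely routine parts are the conjugation reduction and the two observations; the real content is borrowed from Coxeter-group theory — the existence of a simple system inside a prescribed positive system, and the length-dropping property of simple reflections — so I do not expect a serious obstacle beyond invoking these facts cleanly. A self-contained alternative would replace the length induction by a minimal-gallery argument: a minimal gallery from $\fcham$ to $w\fcham$ crosses exactly the mirrors separating the two chambers, each such mirror must contain $x$ because $x$ lies in the closures of both chambers, so every reflection in the gallery fixes $x$, and $w$ is recovered as a product of these reflections. Either route yields $C_G(x) \subseteq G_0$.
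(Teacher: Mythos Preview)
Your proof is correct. The paper, however, does not give an argument at all: its entire proof reads ``Follows directly from Theorem~12.6 in \cite{Mirrors2010}.'' So your approach is genuinely different in that you actually supply a self-contained argument rather than invoke a reference.

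What you do is the standard textbook route (essentially the proof in Humphreys, \S1.12): reduce to $x\in\fcham$ by conjugation, then run an induction on length, at each step finding a simple root $\alpha$ with $w\alpha$ negative and using $0\le\langle x,\alpha\rangle=\langle x,w\alpha\rangle\le0$ to conclude $h_\alpha\in C_G(x)$. This is clean and requires only the existence of a simple system inside $U$ together with the length-drop criterion $\ell(wh_\alpha)=\ell(w)-1\iff w\alpha\notin U$, both of which are standard. The gallery alternative you sketch is also sound, resting on the fact that every wall crossed by a minimal gallery from $\fcham$ to $w\fcham$ separates the two chambers and hence must contain any point lying in the closure of both. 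Either way you end up importing exactly the same Coxeter-theoretic machinery that the cited theorem in \cite{Mirrors2010} packages; the paper simply chooses to outsource this, while you unpack it. Your version has the advantage of making the proof readable without chasing the external reference, at the cost of a paragraph of standard material.
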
 
\begin{proof}
Follows directly from Theorem 12.6 in \cite{Mirrors2010}.
\end{proof}

\section{Subdifferential of a symmetric convex function}\label{sec:subdifferential}

The purpose of this section is to embed the results of Adrian Lewis \cite{Lewis99} in
the framework of finite reflection groups, in a transparent way. Our results are in the same spirit as those of \cite{HillGroupsThesis,HillPaper}. In particular, we rephrase the following result (\cite{Lewis99,BorweinSymm}) in terms of finite reflection groups: if a function $f:\R^n\to \R$ is invariant with respect to the permutation of coordinates, then $y\in \partial f(x)$ if and only if 
\begin{equation}\label{eq:main01}
y^\downarrow \in \partial f(x^\downarrow) \quad \text{and} \quad \langle x^\downarrow, y^\downarrow \rangle = \langle x,y\rangle,  
\end{equation}
where $\langle x,y\rangle = x\transp y$ is the scalar product, and $x^\downarrow$ denotes the nonincreasing reordering of coordinates.

In our developments the permutation group is replaced by the finite reflection group, and the role of reordering of coordinates is played by the operation $\widecheck x$ defined earlier as the intersection of the orbit of the point $x$ with the fundamental chamber. We say that a function $f:\R^n\to \R$ is invariant under a reflection group $G$ if for every $g\in G$ and every $x\in \R^n$ we have $f(gx) = f(x)$. The main goal of this section is to prove the following generalised result.

\begin{theorem}\label{thm:main:1} Let $f:\R^n\to \R$ be a convex function invariant under a finite reflection group $G$.
	Then $y\in \partial f(x)$ if and only if 
	\begin{equation}\label{eq:main}
	\widecheck  y \in \partial f(\widecheck  x) \quad \text{and} \quad \langle \widecheck  x, \widecheck  y\rangle = \langle x,y\rangle,
	\end{equation}
	where by $\widecheck  x$ (resp.  $\widecheck  y$) we denote the unique point that belongs to  the intersection of the orbit $\orbit_G(x)$ of $x$ (resp. $\orbit_G(y)$ of $y$) with the fundamental chamber $\fcham$ of $G$.
\end{theorem}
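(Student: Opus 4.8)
The plan is to reduce the theorem to the Fenchel--Young (in)equality combined with two facts about the symmetry: that the convex conjugate $f^*$ is again $G$-invariant, and that the fundamental-chamber representative of a point maximises the inner product over its orbit. The point of working with $f^*$ rather than with the subdifferential directly is that invariance of $f^*$ is immediate and symmetric in $x$ and $y$.

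First I would record the \emph{conjugate invariance}. Since $f:\R^n\to\R$ is finite and convex it is proper and lower semicontinuous, so $f^*:\R^n\to\R_{+\infty}$ is a genuine conjugate with $y\in\partial f(x)\iff f(x)+f^*(y)=\langle x,y\rangle$, and always $f(x)+f^*(y)\ge\langle x,y\rangle$. Because each $g\in G$ is orthogonal and $f(gx)=f(x)$, the substitution $x\mapsto gx$ in $f^*(v)=\sup_x\{\langle v,x\rangle-f(x)\}$ gives $f^*(gv)=f^*(v)$ for all $g\in G$ (equivalently, $\partial f$ is $G$-equivariant). In particular $f(\widecheck x)=f(x)$ and $f^*(\widecheck y)=f^*(y)$ for all $x,y$, since $\widecheck x\in\orbit_G(x)$ and $\widecheck y\in\orbit_G(y)$.

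Next I would invoke the \emph{orbit-maximality} property: for $a\in\fcham$ and any $b\in\R^n$ one has $\langle a,b'\rangle\le\langle a,\widecheck b\rangle$ for every $b'\in\orbit_G(b)$. This follows from Lemma~\ref{lem:fchamunique}: pick $b^\ast\in\orbit_G(b)$ maximising $b'\mapsto\langle a,b'\rangle$; if $b^\ast\notin\fcham$ then $\langle b^\ast,u\rangle<0$ for some $u\in U$, whence $\langle a,h_ub^\ast\rangle=\langle a,b^\ast\rangle-2\langle b^\ast,u\rangle\langle a,u\rangle\ge\langle a,b^\ast\rangle$ because $\langle a,u\rangle\ge0$, so $h_ub^\ast$ is still a maximiser; moreover $\ell(h_ub^\ast)>\ell(b^\ast)$ for the defining functional $\ell$ of $U$, so, the orbit being finite, this folding terminates at a maximiser in $\fcham$, which by Lemma~\ref{lem:fchamunique} equals $\widecheck b$. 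The consequence I actually use is that for \emph{all} $x,y\in\R^n$, $\langle x,y\rangle\le\langle\widecheck x,\widecheck y\rangle$: writing $\widecheck x=gx$ with $g\in G$ gives $\langle x,y\rangle=\langle\widecheck x,gy\rangle$ with $gy\in\orbit_G(y)$, $\widecheck x\in\fcham$, and $\widecheck{gy}=\widecheck y$.

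With these in hand the theorem is short. For the forward direction, if $y\in\partial f(x)$ then $f(x)+f^*(y)=\langle x,y\rangle$, so $f(\widecheck x)+f^*(\widecheck y)=\langle x,y\rangle\le\langle\widecheck x,\widecheck y\rangle$ by conjugate invariance and orbit-maximality, while Fenchel--Young gives $f(\widecheck x)+f^*(\widecheck y)\ge\langle\widecheck x,\widecheck y\rangle$; hence all three quantities coincide, which is exactly $\langle\widecheck x,\widecheck y\rangle=\langle x,y\rangle$ together with $\widecheck y\in\partial f(\widecheck x)$, i.e.\ \eqref{eq:main}. For the converse, \eqref{eq:main} gives $f(\widecheck x)+f^*(\widecheck y)=\langle\widecheck x,\widecheck y\rangle=\langle x,y\rangle$, and conjugate invariance turns this into $f(x)+f^*(y)=\langle x,y\rangle$, i.e.\ $y\in\partial f(x)$. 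The only genuinely group-theoretic ingredient is the orbit-maximality step of the previous paragraph, so I expect the main thing to get right is that argument (the termination of the folding procedure and its identification with $\widecheck b$ via Lemma~\ref{lem:fchamunique}); alternatively one may simply quote this maximality property from the standard theory of finite reflection groups. All the remaining steps are routine convex duality.
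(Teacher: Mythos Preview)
Your proof is correct and is genuinely different from the paper's. The paper never touches the conjugate $f^*$ or Fenchel--Young; instead it builds a structural decomposition $\partial f(gx)=g\bigl(C_G(\widecheck x)(\partial f(\widecheck x)\cap\fcham)\bigr)$ (Lemma~\ref{lem:subdiffstructure}) via the stabiliser and the root system (Lemmas~\ref{lem:subgrouproots} and~\ref{lem:posoutside}), and then reads off both parts of \eqref{eq:main} from that decomposition. Your argument bypasses all of this: once you have $G$-invariance of $f^*$ and the orbit-maximality inequality $\langle x,y\rangle\le\langle\widecheck x,\widecheck y\rangle$, the Fenchel--Young sandwich gives both conclusions in one line, and the converse is immediate. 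Interestingly, the orbit-maximality statement you prove by folding is exactly the paper's Lemma~\ref{lem:3} (the Eaton--Perlman result), which the paper only introduces later for projections and does \emph{not} use in its proof of Theorem~\ref{thm:main:1}. What the paper's route buys is the intermediate Lemma~\ref{lem:subdiffstructure}, a finer description of $\partial f$ in terms of the stabiliser that is of independent interest; what your route buys is brevity, symmetry in $x$ and $y$, and a proof that makes transparent why convexity enters only through Fenchel--Young.
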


Before we go on with the proof we state some well known facts and technical results.

Recall that given a convex function $f:\R^n \to \R$ its Moreau-Rockafellar {\em subdifferential} $\partial f(x)$ (see \cite{Rockafellar1970}) is the set of such $v\in \R^n$ (called subgradients) that 
$$
f(y)\geq f(x) +\langle v, y-x\rangle \quad \forall \, y \in \R^n.
$$

The next result is well-known and follows directly from the subdifferential chain rule. We provide the proof for convenience.
\begin{lemma}\label{lem:subdiffchain} Let $x\in \R^n$, and assume $f:\R^n \to \R^n$ is convex and invariant under a reflection group $G$. Then for any $y\in \partial f(x)$ and any $g\in G$
\begin{equation}\label{eq:003}
\partial f(x) = g^* \partial f(gx) \quad \text{ and so }\quad  g\partial f(x) = f (gx), 
\end{equation}
where $g^*$ is the adjoint linear operator for $g$. Moreover, if $x$ is such that $x = h x $ for some $h\in G$, then
\begin{equation}\label{eq:004}
\partial f(x) = h \partial f(x).
\end{equation}
\end{lemma}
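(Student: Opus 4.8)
The plan is to use the fact that invariance of $f$ under $G$ is exactly the operator identity $f\circ g = f$ for every $g\in G$, and to differentiate this identity through the linear change of variables $g$. First I would record that $G\leq O(n)$, so each $g\in G$ is an invertible linear map with $g^*=g^{-1}=g\transp$; in particular composing $f$ with $g$ carries no constraint-qualification issue, and the only bookkeeping needed is to keep adjoints and inverses straight. Note also that the clause ``for any $y\in\partial f(x)$'' is immaterial: \eqref{eq:003} is an identity between subsets of $\R^n$.

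Next I would establish the change-of-variables formula $\partial(f\circ g)(x)=g^*\partial f(gx)$ directly from the Moreau--Rockafellar definition rather than invoking a general chain rule. Explicitly, $v\in\partial(f\circ g)(x)$ means $f(gy)\geq f(gx)+\langle v,y-x\rangle$ for all $y\in\R^n$; substituting $z=gy$ (a bijection of $\R^n$) and rewriting $\langle v,g^{-1}z-x\rangle=\langle (g^*)^{-1}v,\,z-gx\rangle$, this is equivalent to $f(z)\geq f(gx)+\langle (g^*)^{-1}v,\,z-gx\rangle$ for all $z$, i.e.\ to $(g^*)^{-1}v\in\partial f(gx)$, i.e.\ to $v\in g^*\partial f(gx)$. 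Since $f\circ g=f$ by invariance, this gives the first identity $\partial f(x)=g^*\partial f(gx)$. Applying the invertible operator $g$ to both sides and using $gg^*=I$ yields $g\partial f(x)=\partial f(gx)$, which is the ``and so'' clause \eqref{eq:003}.

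Finally, for \eqref{eq:004} I would simply specialise $g=h$ with $hx=x$ in the formula just proved: from $g\partial f(x)=\partial f(gx)$ we get $h\partial f(x)=\partial f(hx)=\partial f(x)$. I do not expect a genuine obstacle; the only delicate points are that no constraint qualification is needed precisely because each $g$ is a linear isomorphism of $\R^n$, and that the adjoint equals the inverse because $G$ consists of orthogonal transformations, so all the manipulations with $g^*$ above are legitimate.
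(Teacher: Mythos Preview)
Your proof is correct and follows essentially the same route as the paper: both arguments rest on the chain-rule identity $\partial(f\circ g)(x)=g^*\partial f(gx)$ combined with the invariance $f\circ g=f$, then specialise to $h\in C_G(x)$ for \eqref{eq:004}. The only difference is that the paper cites Rockafellar's chain rule (Theorem~23.9 in \cite{Rockafellar1970}) as a black box, whereas you derive the same formula directly from the subgradient inequality via the bijective substitution $z=gy$; your version is thus slightly more self-contained but not substantively different.
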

\begin{proof}
First observe that every mapping $g\in G$ is surjective, hence, we can apply the subdifferential chain rule  (see \cite[Theorem 23.9]{Rockafellar1970}):
$$
\partial (f\circ g)(x) = g^* \partial f(gx).
$$
Since in our case $f$ is $G$-invariant, we have $(f\circ g ) (x) = f(x)$, hence, $\partial (f\circ g )(x) = \partial f(x)$, and we get \eqref{eq:003}. The relation \eqref{eq:004} follows from \eqref{eq:003} by substituting $x=gx$ in the right hand side.
\end{proof}

The combination of group structure and convexity allows for more precise characterisations of the Moreau-Rockafellar subdifferential in terms of the stabiliser given in Lemma~\ref{lem:subdiffstructure}. We need the following technical result first.

\begin{lemma}\label{lem:posoutside} Let $f:\R^n\to \R$ be invariant under a finite reflection group $G$, and let $U$ be a positive root system of $G$. For  $x \in \fcham$ let 
$$
V(x) := \{v\in U\, |\, h_v \in C_{G}(x)\},
$$ 
where $C_G(x)$ is the stabiliser of $x$. Then for any $y\in \partial f(x)$ and $u \in U\setminus V$ we have $\langle y,u\rangle \geq 0$.
\end{lemma}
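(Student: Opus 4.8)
The plan is to exploit the symmetry relation \eqref{eq:004} from Lemma~\ref{lem:subdiffchain} together with convexity of the subdifferential set. Fix $x\in\fcham$, $y\in\partial f(x)$, and $u\in U\setminus V(x)$. Since $u\notin V(x)$, the reflection $h_u$ does \emph{not} fix $x$, so $h_u x\ne x$; but I claim we do not need $h_u\in C_G(x)$ — rather, I want to find \emph{some} element of $G$ fixing $x$ that reflects in a useful direction. The cleaner route: consider the element $h_u$ acting on $y$. Because $f$ is $G$-invariant and $h_u$ is an orthogonal involution, Lemma~\ref{lem:subdiffchain} gives $\partial f(x) = h_u\,\partial f(h_u x)$, which is not directly $\partial f(x)$ since $h_u x\neq x$. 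So instead I would argue via the orbit of $x$ and the geometry of the chamber.

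Here is the argument I would carry out. Since $x\in\fcham=(\cone U)^*$, we have $\langle x,u\rangle\ge 0$ for all $u\in U$. Suppose toward a contradiction that $\langle y,u\rangle<0$ for some $u\in U\setminus V(x)$. The key point is to use that $C_G(x)$ is generated by the reflections it contains (Lemma~\ref{lem:subgrouproots}), hence $C_G(x)$ is itself a finite reflection group whose root system is contained in $\{v\in\Phi : h_v\in C_G(x)\}$, and whose positive system is $V(x)$ (relative to the same linear functional defining $U$). By \eqref{eq:004}, $\partial f(x)$ is invariant under the whole stabiliser $C_G(x)$, so the point of $\partial f(x)$ that lies in the fundamental chamber $\fcham(C_G(x))$ of this smaller group — call it $\bar y$ — satisfies $\langle \bar y, v\rangle\ge 0$ for all $v\in V(x)$, and moreover $\langle x,\bar y\rangle=\langle x,y\rangle$ because $x$ is fixed by all of $C_G(x)$ and lies in that smaller chamber too (since $\langle x,v\rangle\ge 0$ for $v\in V(x)\subseteq U$). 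Replacing $y$ by $\bar y$, which is legitimate since the claim $\langle y,u\rangle\ge 0$ is about $u\in U\setminus V(x)$ and I will re-derive it for $y$ from $\bar y$, I reduce to showing $\langle\bar y,u\rangle\ge 0$. Then the decisive step is that $\fcham(C_G(x))$, being cut out only by the inequalities indexed by $V(x)$, contains the full fundamental chamber $\fcham$ of $G$ together with enough of its neighbouring chambers; in particular the reflections of $x$ across mirrors $u\in U\setminus V(x)$ reach points still satisfying $f(h_u x)=f(x)$, and convexity of $f$ along the segment joining $x$ and $h_u x$ forces $\langle y, h_u x - x\rangle\le 0$, i.e. $-2\langle y,u\rangle\langle x,u\rangle \le 0$, whence $\langle y,u\rangle\langle x,u\rangle\ge 0$; combined with $\langle x,u\rangle\ge 0$ this gives the conclusion except on the face where $\langle x,u\rangle=0$, which is exactly the case $h_u\in C_G(x)$, i.e. $u\in V(x)$ — excluded by hypothesis.

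So the concrete steps are: (i) record $\langle x,u\rangle\ge 0$ for all $u\in U$ from $x\in\fcham$; (ii) for $u\in U\setminus V(x)$ note $h_u x\ne x$ and $f(h_u x)=f(x)$ by $G$-invariance; (iii) apply the subgradient inequality at $x$ with test point $h_u x$: $f(h_u x)\ge f(x)+\langle y, h_u x - x\rangle$, hence $\langle y, h_u x - x\rangle\le 0$; (iv) compute $h_u x - x = -2\langle x,u\rangle\,u$, so $\langle y, h_u x - x\rangle = -2\langle x,u\rangle\langle y,u\rangle\le 0$, giving $\langle x,u\rangle\langle y,u\rangle\ge 0$; (v) since $u\notin V(x)$ means $h_u\notin C_G(x)$ hence $\langle x,u\rangle>0$ (here one uses that $h_u$ fixes $x$ iff $\langle x,u\rangle=0$, because $h_u x = x - 2\langle x,u\rangle u$), conclude $\langle y,u\rangle\ge 0$. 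The main obstacle — and the only place needing care — is step (v): proving that $h_u x = x$ is equivalent to $\langle x,u\rangle = 0$, and hence that $u\in U\setminus V(x)$ forces the strict inequality $\langle x,u\rangle>0$ rather than merely $\ge 0$. This is elementary from the Householder formula $H_u = I - 2uu\transp$, so in fact the whole proof is short and the reduction via Lemma~\ref{lem:subgrouproots} sketched in the second paragraph is not even needed; I would present only steps (i)–(v).
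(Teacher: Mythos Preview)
Your proposal is correct and follows essentially the same route as the paper: both use $G$-invariance to get $f(h_u x)=f(x)$, combine this with the subgradient inequality to deduce $\langle x,u\rangle\langle y,u\rangle\ge 0$, and then finish via the Householder formula by noting that $u\in U\setminus V(x)$ forces $\langle x,u\rangle>0$. The only cosmetic difference is that the paper passes through the midpoint $\tfrac12(x+h_u x)=x-\langle x,u\rangle u$ using convexity of $f$ before invoking the subgradient inequality, whereas you apply the subgradient inequality directly at the test point $h_u x$; your version is in fact slightly leaner, and the exploratory detour through $C_G(x)$ and Lemma~\ref{lem:subgrouproots} in your second paragraph is, as you yourself note, unnecessary.
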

\begin{proof} Fix $x\in \fcham$ and pick an arbitrary $y \in \partial f(x) $ and $u\in U\setminus V$. From the convexity and invariance of $f$ we have for the reflection $h_u$ represented by the Householder transformation $H_u = I - 2 u u\transp$:
$$
f(x) = \frac{1}{2}f(x)+\frac{1}{2}f(h_u x) \geq f\left(\frac{x+h_u x}{2}\right) = f(x-\langle u,x\rangle u)\geq f(x) -\langle u,x\rangle \langle u, y\rangle,
$$ where the last inequality follows from the definition of the subdifferential.
Hence,
$$
\langle u,x\rangle \langle u, y\rangle\geq 0.
$$ 
Since $x \in \fcham$, and $\langle x,u\rangle \neq 0$ (recall that $h_u$ is {\em not} in the stabiliser), we have $\langle u,x\rangle>0$, and therefore $\langle u,y\rangle \geq 0$.
\end{proof}

\begin{lemma}\label{lem:subdiffstructure} Let $f:\R^n\to \R$ be a convex function invariant under a finite reflection group action $G$, and assume $x\in \fcham$. Then for any $g\in G$
$$
\partial f(gx) = g(C_G(x)(\partial f(x)\cap \fcham)),
$$
where $C_G(x)$ is the stabiliser of $x$.
\end{lemma}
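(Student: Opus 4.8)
The plan is to reduce to the case $g=e$ and then prove the two inclusions. Since $G\subseteq O(n)$, every $g$ is orthogonal and $g^{*}=g^{-1}$, so Lemma~\ref{lem:subdiffchain} gives $\partial f(gx)=g\,\partial f(x)$; applying $g$ to both sides of the claimed identity, it suffices to prove $\partial f(x)=C_G(x)\bigl(\partial f(x)\cap\fcham\bigr)$ for $x\in\fcham$. The inclusion $\supseteq$ is immediate: if $h\in C_G(x)$ then $hx=x$, so \eqref{eq:004} gives $h\,\partial f(x)=\partial f(x)$, and hence $h$ carries $\partial f(x)\cap\fcham$ into $\partial f(x)$.

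For $\subseteq$ I would fix $y\in\partial f(x)$ and hunt for $h\in C_G(x)$ with $hy\in\fcham$; once found, $hy$ lies automatically in $\partial f(x)$ by the same reasoning, so $y=h^{-1}(hy)\in C_G(x)\bigl(\partial f(x)\cap\fcham\bigr)$. The first step is to realise $C_G(x)$ as a finite reflection group in its own right: by Lemma~\ref{lem:subgrouproots} it is generated by the reflections it contains, and a reflection $h_v$ with $v\in\Phi$ fixes $x$ precisely when $\langle v,x\rangle=0$, so the root system of $C_G(x)$ is $\Phi\cap x^{\perp}$. Since the linear functional defining the positive system $U$ is nonzero on all of $\Phi$, we have $\Phi=U\sqcup(-U)$, and restricting this functional to $\Phi\cap x^{\perp}=(U\cap x^{\perp})\sqcup(-U\cap x^{\perp})$ shows that $V(x)=U\cap x^{\perp}$ is a positive root system for $C_G(x)$. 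Its fundamental chamber $\fcham_x=\{z : \langle z,v\rangle\ge 0\ \ \forall v\in V(x)\}$ contains $\fcham$ because $V(x)\subseteq U$.

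Now I would apply Lemma~\ref{lem:fchamunique} to the group $C_G(x)$ and the point $y$: there exists $h\in C_G(x)$ with $hy\in\fcham_x$, i.e. $\langle hy,v\rangle\ge 0$ for all $v\in V(x)$. To upgrade this to $hy\in\fcham$ I need $\langle hy,u\rangle\ge 0$ for the remaining $u\in U\setminus V(x)$ as well. This is where the subdifferential enters: $h\in C_G(x)$ gives $hy\in h\,\partial f(x)=\partial f(x)$ by \eqref{eq:004}, so Lemma~\ref{lem:posoutside} applies to $hy$ and yields $\langle hy,u\rangle\ge 0$ for all $u\in U\setminus V(x)$. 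Combining the two cases, $hy\in\fcham$, which completes the argument.

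The hard part is exactly this last upgrade: Lemma~\ref{lem:fchamunique}, applied inside $C_G(x)$, only places the chosen orbit representative in the larger chamber $\fcham_x$, and it is the extra positivity of subgradients supplied by Lemma~\ref{lem:posoutside} — the one place where convexity and $G$-invariance of $f$ are genuinely used — that forces it into $\fcham$. A secondary point that should be written out carefully, rather than merely asserted, is the identification of $V(x)$ as a positive root system of $C_G(x)$: that $\Phi\cap x^{\perp}$ splits as $V(x)\sqcup(-V(x))$, and that the separating functional for $(U,-U)$ restricts to a separating functional for $(V(x),-V(x))$.
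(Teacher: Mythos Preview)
Your proof is correct and follows essentially the same route as the paper: reduce to $g=e$ via Lemma~\ref{lem:subdiffchain}, get the easy inclusion from $h\,\partial f(x)=\partial f(x)$ for $h\in C_G(x)$, and for the reverse inclusion use Lemma~\ref{lem:fchamunique} inside $C_G(x)$ to land $hy$ in the larger chamber $\fcham_x=(\cone V(x))^*$, then invoke Lemma~\ref{lem:posoutside} on $hy\in\partial f(x)$ to force $hy\in\fcham$. Your write-up is in fact slightly more careful than the paper's on the point you flagged as secondary: the paper simply asserts that $V$ is a positive root system of $C_G(x)$ ``defined by the same linear mapping as $U$'', whereas you spell out why $\Phi\cap x^{\perp}=V(x)\sqcup(-V(x))$ and why the separating functional restricts.
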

\begin{proof} By Lemma~\ref{lem:subdiffchain} we have $\partial f(gx) =  g\partial f(x) $, hence, it is sufficient to show that
\begin{equation}\label{eq:004}
\partial f(x) = C_G(x)(\partial f(x)\cap \fcham) \qquad \forall \, x\in \fcham.
\end{equation}

Let $x\in \fcham$. If $y\in \partial f(x)$, then by Lemma~\ref{lem:subdiffchain} we have  $h y\in \partial f(x)$ for all $h \in C_G(x)$, hence, 
$$
C_G(x)(\partial f(x)\cap \fcham) \subset \partial f(x).
$$

To show the reverse inclusion, let $y\in \partial f(x)$. From the positive root system $U$ choose the subsystem $V = \{v\in U\,|\, h_v \in C_G( x)\}$. By Lemma~\ref{lem:subgrouproots}, $V$ is the positive root system of $C_G(x)$ defined by the same linear mapping as $U$. By Lemma~\ref{lem:fchamunique} the intersection of each orbit with the fundamental chamber is unique, hence, there exists $h\in C_G(x)$ such that $\langle hy, v\rangle \geq 0$ for all $v\in V$ i.e $hy \in \orbit (y)\cap (\cone V)^{\ast}$. For all $u\in U\setminus V $ Lemma~\ref{lem:posoutside} yields  $\langle hy, u\rangle \geq 0 $. Observe that also by Lemma~\ref{lem:subdiffchain} we have $hy\in \partial f(x)$. We hence conclude that  
$hy \in \fcham\cap \partial f(x)$, and hence $y = h^* (h y) \in  C_G(x)(\partial f(x)\cap \fcham)$. By the arbitrariness of $y$ this yields the desired inclusion
$$
\partial f(x) \subset C_G(x)(\partial f(x)\cap \fcham).
$$
\end{proof}

We are now ready to present the proof of the main theorem.

\noindent{\em Proof of Theorem~\ref{thm:main:1}.} 
%\begin{proof} 
Let $y\in \partial f(x)$. There exists $g \in G$ such that $x= g\widecheck  x$. From Lemma~\ref{lem:subdiffstructure} 
$$
y \in  g (C_G(\widecheck  x)(\partial f(\widecheck  x) \cap \fcham)),
$$
therefore, there exists $h\in  C_G(\widecheck  x) $ and $y'\in \partial f(\widecheck  x) \cap \fcham$ such that $y = gh y'$. Thus $y' \in \orbit (y) \cap \fcham$. By Lemma~\ref{lem:fchamunique} such $y'$ is unique and must coincide with $\widecheck  y$. Hence $y = g h y' = g h \widecheck  y$. Since $h$ fixes $\widecheck  x$, we also have $x = g\widecheck  x = g h \widecheck  x$. Applying Lemma~\ref{lem:subdiffchain}, we have 
$$
\widecheck  y = h^* g^* y  \in h^* g^* \partial f(x) = h^*g^*\partial f(g h \widecheck  x) = \partial f(\widecheck  x). 
$$
Finally, 
$$
\langle x, y \rangle = \langle g h \widecheck  x, g h \widecheck  y \rangle = \langle \widecheck  x, \widecheck  y\rangle.
$$

Now assume that \eqref{eq:main} holds for some $x, y \in \R^n$. 
Then there exists $g \in G$ such that  $\widecheck{x} = g x$ and Lemma~\ref{lem:subdiffchain} gives 
$$
\widecheck{y} \in \partial f (\widecheck{x}) = g \partial f(x).
$$
Thus there exists $y^{\prime} \in \partial f (x)$ with $\widecheck{y} = g y^{\prime}$, and $y^{\prime} \in  \partial f(x)$. As $y^{\prime} \in \orbit (\widecheck{y}) = \orbit (y)$, there is a 
$k \in G$ with $ky = y^{\prime}$. As $y^{\prime} \in \partial f(x)$ via the subgradient inequality (and invariance of $f$) we have  for all $z$ that 
\begin{align*}
f(z) - f(x) 
& = f(kz) - f(x) \geq  \langle  y^{\prime} , kz - x \rangle \\
& = \langle k y , kz \rangle - \langle y^{\prime} , x \rangle  
  = \langle y , z \rangle - \langle g^{\ast} \widecheck y , g^{\ast} \widecheck x \rangle  \\
 & = \langle y , z \rangle - \langle y , x \rangle 
  = \langle y , z - x \rangle,
\end{align*}
hence $y \in \partial f (x)$.\qed 
%\end{proof}

\section{Projections onto symmetric sets}\label{sec:projections}

It appears that projections onto permutation invariant sets possess even more structure than the Moreau-Rockafellar subdifferential, moreover, several results can be stated in terms of sets and functions more general than convex. We begin with reminding several known technical results. The following result was first obtained in \cite{EatonPerlman1977}.
% For completeness we give a self contained proof in an appendix that relies on an elementary argument.
\begin{lemma}\label{lem:3}
	Suppose $y, x \in \R^n$ then for all $g \in G$ we have 
	\[
	\langle g \widecheck y  , \widecheck x  \rangle \leq \langle \widecheck y  , \widecheck x  \rangle. 
	\]
	Moreover we have equality if and only if there exists $q \in C_G(\widecheck x)$ such that
	$q (g \widecheck y )= \widecheck y$.  
\end{lemma}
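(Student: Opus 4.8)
The plan is to reduce the statement to the elementary fact that reflecting a vector across a mirror on whose negative side it lies cannot decrease its inner product with any point of the fundamental chamber. Writing $a:=\widecheck x$ and $b:=\widecheck y$ (both in $\fcham$) and recalling that $h_u z = z - 2\langle u,z\rangle u$, one has for every $c\in\R^n$ and every $u\in U$
$$
\langle h_u c,a\rangle = \langle c,a\rangle - 2\langle u,c\rangle\langle u,a\rangle,
$$
and since $a\in\fcham$ forces $\langle u,a\rangle\ge 0$, the right-hand side is at least $\langle c,a\rangle$ whenever $\langle u,c\rangle<0$, with equality only when $\langle u,a\rangle=0$, i.e. only when $h_u$ fixes $a$.

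To prove the inequality I would take $c\in\orbit(y)$ maximising $\langle\cdot,a\rangle$ over this finite orbit and show that the maximiser can be taken to be $b$ itself. By Lemma~\ref{lem:subgrouproots} the stabiliser $C_G(a)$ is a finite reflection group; its positive root system is $V=\{v\in U : h_v\in C_G(a)\}$ and its fundamental chamber is $\fcham_V:=(\cone V)^*$, exactly as in the proof of Lemma~\ref{lem:subdiffstructure}. Every $q\in C_G(a)$ is orthogonal and fixes $a$, so $\langle\cdot,a\rangle$ is constant on $C_G(a)$-orbits; hence, applying Lemma~\ref{lem:fchamunique} to $C_G(a)$, we may replace $c$ by an element of $\orbit(y)\cap\fcham_V$ that still attains the maximum. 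Now if $\langle u,c\rangle<0$ for some $u\in U$, then $u\notin V$ (because $c\in\fcham_V$), so $h_u\notin C_G(a)$, hence $\langle u,a\rangle>0$ and the displayed identity gives $\langle h_u c,a\rangle>\langle c,a\rangle$ with $h_u c\in\orbit(y)$ --- impossible. Thus $c\in\orbit(y)\cap\fcham=\{b\}$, so $\max_{g\in G}\langle g\widecheck y,\widecheck x\rangle=\langle\widecheck y,\widecheck x\rangle$, which is the claimed bound.

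For the equality clause, if $\langle g\widecheck y,\widecheck x\rangle=\langle\widecheck y,\widecheck x\rangle$ then $c:=g\widecheck y$ attains this maximum, so the reduction of the previous paragraph applied to this particular $c$ produces $q\in C_G(\widecheck x)$ with $q(g\widecheck y)\in\fcham$, whence $q(g\widecheck y)\in\orbit(y)\cap\fcham=\{\widecheck y\}$; this $q$ is the one required. The converse is immediate, since for $q\in C_G(\widecheck x)$, which is orthogonal and satisfies $q\widecheck x=\widecheck x$, the assumption $q(g\widecheck y)=\widecheck y$ gives $\langle g\widecheck y,\widecheck x\rangle=\langle q(g\widecheck y),q\widecheck x\rangle=\langle\widecheck y,\widecheck x\rangle$.

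I expect the one genuinely delicate point to be the positive roots $u$ with $\langle u,c\rangle<0$ but $\langle u,a\rangle=0$: these give equality rather than a strict increase in the reflection step, so they do not on their own contradict maximality of $c$. Observing that the corresponding reflections all lie in $C_G(a)$, and therefore moving $c$ into the fundamental chamber of the reflection group $C_G(a)$ before running the argument, is precisely what removes these roots from consideration --- and it is also exactly what delivers the stabiliser element $q$ needed for the equality characterisation.
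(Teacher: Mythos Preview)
Your argument is correct. The key trick --- first moving the maximiser into the fundamental chamber $\fcham_V$ of the stabiliser $C_G(\widecheck x)$, so that any remaining positive root $u$ with $\langle u,c\rangle<0$ automatically satisfies $\langle u,\widecheck x\rangle>0$ and hence gives a \emph{strict} increase under $h_u$ --- is exactly what is needed, and you have identified and handled the delicate point yourself.

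As to the comparison: the paper does not prove Lemma~\ref{lem:3} at all; it merely attributes the result to Eaton and Perlman and states it. Your proof is therefore not a variant of the paper's argument but a genuine addition: a self-contained derivation that uses only the ingredients already assembled in the paper (Lemma~\ref{lem:fchamunique}, Lemma~\ref{lem:subgrouproots}, and the positive-root-subsystem observation from the proof of Lemma~\ref{lem:subdiffstructure}). In effect you recover the Eaton--Perlman maximality result from the chamber geometry of finite reflection groups, which fits the spirit of the paper nicely and avoids the external reference.
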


\begin{corollary}\label{cor:3}
	Suppose $G$ is a finite reflection group and for $y, x \in \R^n$ have 
	\[
	\langle \widecheck y  , \widecheck x  \rangle = \langle  y  ,   x  \rangle
	\]
	then there exists $h \in G$ such that $\widecheck x = h x$ and $\widecheck y = h y$. 
\end{corollary}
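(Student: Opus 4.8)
The plan is to reduce the equality $\langle \widecheck y, \widecheck x\rangle = \langle y,x\rangle$ to the equality case of Lemma~\ref{lem:3} and then to assemble the single group element $h$ from the two group elements that bring $x$ and $y$ into the fundamental chamber. First I would pick $g_1, g_2 \in G$ with $\widecheck x = g_1 x$ and $\widecheck y = g_2 y$. Then
$$
\langle y, x\rangle = \langle g_2 y, g_2 x\rangle = \langle \widecheck y, g_2 x\rangle = \langle \widecheck y, (g_2 g_1^{-1})\widecheck x\rangle,
$$
using that $G$ acts by orthogonal transformations. Writing $g := g_2 g_1^{-1}$ (or rather applying Lemma~\ref{lem:3} with the roles of $x$ and $y$ suitably arranged), the hypothesis says $\langle g\widecheck x, \widecheck y\rangle = \langle \widecheck x,\widecheck y\rangle$, i.e. the equality case of Lemma~\ref{lem:3} holds.

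Next I would invoke the ``moreover'' part of Lemma~\ref{lem:3}: there exists $q \in C_G(\widecheck y)$ with $q(g\widecheck x) = \widecheck x$ — that is, $q g \widecheck x = \widecheck x$ (here I apply the lemma in the symmetric form, swapping the names of $x$ and $y$, which is legitimate since the statement of Lemma~\ref{lem:3} is symmetric in the two points). Now set $h := (qg)^{-1} g_1 = g_1^{-1} g^{-1} q^{-1} g_1$; more transparently, I want $h$ to satisfy simultaneously $hx = \widecheck x$ and $hy = \widecheck y$. Since $q$ fixes $\widecheck y$ and $g = g_2 g_1^{-1}$, the element $h := g_1^{-1}(qg)^{-1}\cdots$ — let me instead build it directly: take $h := q^{-1} g_2$. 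Then $hy = q^{-1} g_2 y = q^{-1}\widecheck y = \widecheck y$ because $q \in C_G(\widecheck y)$; and $hx = q^{-1} g_2 x = q^{-1} g_2 g_1^{-1}\widecheck x = q^{-1} g \widecheck x = \widecheck x$ because $qg\widecheck x = \widecheck x$ means $g\widecheck x = q^{-1}\widecheck x$, hence $q^{-1}g\widecheck x = q^{-2}\widecheck x$ — which is not obviously $\widecheck x$ unless $q$ has order two. So the clean choice is $h := g q$ rearranged; the correct assignment is to use $qg\widecheck x = \widecheck x$ directly: define $h$ so that $h x = \widecheck x$ via $h = (qg) g_1^{-1}\cdot g_1 = qg$ acting appropriately. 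Concretely, $h := q g_2$ works if instead $q$ is taken in $C_G(\widecheck y)$ with $q g \widecheck x = \widecheck x$ where $g = g_2 g_1^{-1}$: then $h x = q g_2 g_1^{-1}\widecheck x = q g\widecheck x = \widecheck x$ and $h y = q g_2 y = q\widecheck y = \widecheck y$, as desired.

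The only genuine subtlety, and the step I would be most careful about, is the bookkeeping of which point plays the role of ``$x$'' and which plays ``$y$'' in Lemma~\ref{lem:3}, and correspondingly whether the stabiliser element $q$ lies in $C_G(\widecheck x)$ or $C_G(\widecheck y)$: the construction of $h$ must use $q$ to fix exactly the check-point that $h$ already maps correctly from $g_2 y$ (or $g_1 x$), so that composing with $q$ does not spoil that coordinate while fixing the other. Once that matching is set up correctly, as in the last display above, the verification that $hx = \widecheck x$ and $hy = \widecheck y$ is a one-line computation using orthogonality of the action and the defining property of the stabiliser, and the corollary follows. I would also note in passing that the inequality direction of Lemma~\ref{lem:3} is not needed here — only its equality characterisation — but it is worth recording that the hypothesis $\langle\widecheck y,\widecheck x\rangle = \langle y,x\rangle$ is exactly an equality in that inequality after the reduction above, which is what licenses the application.
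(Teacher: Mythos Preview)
Your final argument is correct and is essentially the paper's own proof: you reduce to the equality case of Lemma~\ref{lem:3}, extract a stabiliser element, and compose it with one of the two orbit maps to produce the single $h$. The paper chooses $q,p\in G$ with $q\widecheck y=y$, $p\widecheck x=x$, applies Lemma~\ref{lem:3} to $g=p^{-1}q$ to get $w\in C_G(\widecheck x)$ with $wp^{-1}q\widecheck y=\widecheck y$, and sets $h:=wp^{-1}$; your eventual choice $h:=qg_2$ with $q\in C_G(\widecheck y)$ satisfying $q(g_2g_1^{-1})\widecheck x=\widecheck x$ is precisely the mirror of this with the roles of $x$ and $y$ interchanged, so the approaches coincide --- though you should prune the two abandoned attempts ($h:=q^{-1}g_2$, etc.) from the write-up, since only the last one is needed.
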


\begin{proof}
	Take $q , p \in G$ such that $q \widecheck y = y$ and $p \widecheck x = x$. Then 
	\[
	\langle p^{-1} q \widecheck y , \widecheck x \rangle = \langle \widecheck y , \widecheck x \rangle.
	\]
	By Lemma \ref{lem:3} we have the existence of $w \in C_{G} (\widecheck x)$ such that 
	$w p^{-1} q \widecheck y = \widecheck y$ and hence we may take $h := w p^{-1}$. 
\end{proof}

Let $C$ be a nonempty closed set in $\R^n$. Then the set valued projection operator $\Proj_C(\cdot)$ is well defined,
$$
\Proj_C(x) = \Argmin_{y\in C} \|x-y\|_2.
$$
By $\dist(x,C)$ denote the relevant distance from a point $x\in \R^n$ to $C$,
$$
\dist(x,C) = \min_{y\in C} \|x-y\|_2.
$$

\begin{proposition}\label{lem:ProjAction}
	Let $C$ be a nonempty closed set invariant under a finite reflection group $G$. Then for every $g\in G$ we have $g\Proj_C(x) = \Proj_C(gx)$, where 
\end{proposition}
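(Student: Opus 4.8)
The plan is to use two facts and nothing more: every element of $G$ is a linear isometry of $\R^n$, and $G$-invariance of $C$ means $gC = C$ for all $g \in G$. Since $G \subset O(n)$, each $g \in G$ satisfies $g^* g = I$, so $\|gx - gy\| = \|x - y\|$ for all $x, y \in \R^n$; and since $gC = C$, the map $y \mapsto gy$ is a bijection of $C$ onto itself whose inverse $y \mapsto g^{-1} y = g^* y$ again lies in $G$ (as $G$ is a group). Because $C$ is nonempty and closed the projection operator $\Proj_C$ is well defined, so the statement is a genuine equality of (possibly set-valued) nearest-point sets.

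The main step is a two-sided chain of equivalences. Fix $x \in \R^n$ and $g \in G$. A point $z$ lies in $\Proj_C(gx)$ exactly when $z \in C$ and $\|gx - z\| \le \|gx - w\|$ for every $w \in C$. Writing $z = gy$ — legitimate since $z \in C = gC$, with $y = g^{-1}z \in C$ — and observing that as $w$ ranges over $C$ so does $y' := g^{-1} w$, the defining inequality reads $\|gx - gy\| \le \|gx - gy'\|$ for all $y' \in C$, which by the isometry property is $\|x - y\| \le \|x - y'\|$ for all $y' \in C$, i.e. $y \in \Proj_C(x)$. Hence $z \in \Proj_C(gx)$ if and only if $z = gy$ with $y \in \Proj_C(x)$, that is, $\Proj_C(gx) = g\Proj_C(x)$.

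I do not expect any real obstacle here; this is a routine isometry-plus-invariance argument. The only point requiring a little care is that $C$ is not assumed convex, so $\Proj_C(x)$ may contain more than one point and the argument must be run at the level of sets, as above, rather than for a single nearest point. It is also worth remarking that the same computation yields $\dist(gx, C) = \dist(x, C)$, which is convenient to state alongside the proposition.
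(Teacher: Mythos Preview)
Your proof is correct and follows essentially the same approach as the paper's: both use that $g$ is an isometry and that $gC=C$ to match nearest points of $x$ in $C$ with nearest points of $gx$ in $C$. The only cosmetic difference is that you run a single chain of equivalences, whereas the paper shows one inclusion directly and obtains the other by replacing $g$ with $g^{-1}$; your remark that $\dist(gx,C)=\dist(x,C)$ is stated in the paper as the opening observation rather than as a corollary.
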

\begin{proof}
First of all, observe that due to symmetry $\dist(x,C)= \dist(gx,C)$ for all $g\in G$. Let $y\in \Proj_C(x)$. Then $\|gy - gx\| = \|y-x\|$, hence, $g y \in \Proj_C(gx)$. The reverse inclusion is obtained by considering $x' = g^{-1} (gx)$.	
\end{proof}

We next summarise the properties of projection onto an invariant set.

\begin{theorem}\label{thm:encompass} Let $C\subset \R^n$ be a nonempty closed set invariant under the action of a finite reflection group $G$. Then the following statements are true:
	\begin{itemize}
		\item[(i)] for $x\in \mathcal C$, we have $\Proj_C(x)=C_G(x)[\Proj_C(x)\cap \mathcal C]\neq \emptyset$; 
		\item[(ii)] $y\in \Proj_C(x)$ if and only if $\widecheck y \in \Proj_C(\widecheck x)$ and $\langle \widecheck x,\widecheck y\rangle = \langle x,y\rangle$;
		\item[(iii)] if $C$ is convex, and $x\in \mathcal C$, then $\Proj_C(x) \in C\cap \mathcal C$. 
	\end{itemize}
\end{theorem}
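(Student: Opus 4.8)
The plan is to prove the three parts in order, reusing the orbital machinery exactly as in the subdifferential case, with Lemma~\ref{lem:3} and Corollary~\ref{cor:3} playing the role that Lemma~\ref{lem:posoutside} played before. For part~(i), fix $x\in\fcham$. Since $C$ is nonempty and closed, $\Proj_C(x)\neq\emptyset$. The inclusion $C_G(x)[\Proj_C(x)\cap\fcham]\subseteq\Proj_C(x)$ is immediate from Proposition~\ref{lem:ProjAction}: if $h\in C_G(x)$ and $y\in\Proj_C(x)\cap\fcham$, then $hy\in\Proj_C(hx)=\Proj_C(x)$. For the reverse inclusion, take any $y\in\Proj_C(x)$ and let $\widecheck y$ be its representative in $\fcham$; I must produce $h\in C_G(x)$ with $hy=\widecheck y$ (this would give $\widecheck y\in\Proj_C(x)\cap\fcham$ and $y=h^{-1}\widecheck y$). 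Here is where the inner-product geometry enters: $y$ minimises $\|x-\cdot\|$ over $C$, equivalently maximises $\langle x,y\rangle$ among points of $C$ at fixed norm; since $\widecheck y\in\orbit(y)\subseteq C$ has the same norm, minimality of $y$ forces $\langle x,y\rangle\geq\langle x,\widecheck y\rangle$, while Lemma~\ref{lem:3} (with $x=\widecheck x$ since $x\in\fcham$) gives $\langle x,y\rangle\leq\langle\widecheck x,\widecheck y\rangle=\langle x,\widecheck y\rangle$. Hence equality holds, and the equality clause of Lemma~\ref{lem:3} yields $q\in C_G(\widecheck x)=C_G(x)$ with $q(gy)=\widecheck y$ where $g$ is chosen so $gy$ is the orbit representative tested in the lemma; unwinding, $hy=\widecheck y$ for a suitable $h\in C_G(x)$.

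For part~(ii), the ``only if'' direction: given $y\in\Proj_C(x)$, pick $g\in G$ with $x=g\widecheck x$; by Proposition~\ref{lem:ProjAction}, $g^{-1}y\in\Proj_C(\widecheck x)$. Applying part~(i) at the point $\widecheck x\in\fcham$, we get $g^{-1}y=h y'$ for some $h\in C_G(\widecheck x)$ and $y'\in\Proj_C(\widecheck x)\cap\fcham$; since $y'\in\orbit(y)\cap\fcham$, uniqueness (Lemma~\ref{lem:fchamunique}) gives $y'=\widecheck y$, so $\widecheck y\in\Proj_C(\widecheck x)$, and $y=gh\widecheck y$ with $gh\widecheck x=g\widecheck x=x$ (as $h$ fixes $\widecheck x$), whence $\langle x,y\rangle=\langle gh\widecheck x,gh\widecheck y\rangle=\langle\widecheck x,\widecheck y\rangle$. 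For ``if'': suppose $\widecheck y\in\Proj_C(\widecheck x)$ and $\langle\widecheck x,\widecheck y\rangle=\langle x,y\rangle$. By Corollary~\ref{cor:3} there is $h\in G$ with $\widecheck x=hx$ and $\widecheck y=hy$; then $y=h^{-1}\widecheck y\in h^{-1}\Proj_C(\widecheck x)=\Proj_C(h^{-1}\widecheck x)=\Proj_C(x)$ by Proposition~\ref{lem:ProjAction}.

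For part~(iii), if $C$ is convex the projection is single-valued, so $\Proj_C(x)=\{y\}$ for a unique $y$, and part~(i) gives $\{y\}=C_G(x)[\{y\}\cap\fcham]$; this is only possible if $y\in\fcham$, so $y\in C\cap\fcham$. The main obstacle I anticipate is the bookkeeping in part~(i): correctly identifying which orbit representative of $y$ to feed into Lemma~\ref{lem:3} and then tracking the stabiliser element back to one that sends $y$ itself (not merely some conjugate) to $\widecheck y$; this is the same subtlety handled in Lemma~\ref{lem:subdiffstructure}, and I would mirror that argument closely, using that $C_G(\widecheck x)=C_G(x)$ when $x\in\fcham$ and that any orbit meets $\fcham$ in exactly one point.
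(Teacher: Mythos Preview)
Your proposal is correct and follows essentially the same approach as the paper: part~(i) via Lemma~\ref{lem:3} (you invoke the equality clause directly where the paper phrases it as a contradiction), part~(ii) forward via part~(i) plus Lemma~\ref{lem:fchamunique}, and part~(iii) from uniqueness of convex projection. The only cosmetic difference is in the ``if'' direction of~(ii): you use Corollary~\ref{cor:3} to produce a common $h$ and then apply Proposition~\ref{lem:ProjAction}, whereas the paper just observes that $\langle x,y\rangle=\langle\widecheck x,\widecheck y\rangle$ and $\|y\|=\|\widecheck y\|$ force $\|x-y\|=\|\widecheck x-\widecheck y\|=\dist(\widecheck x,C)=\dist(x,C)$ directly.
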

\begin{proof} First of all, observe that the set-valued projection operator ${\Proj}_C$ is nonempty at $x$. 
	
Let $y'$ be an arbitrary point in ${\Proj}_C(x)$, and let $g\in G$ be such that $y' = g\widecheck y'$. Then by Lemma~\ref{lem:3} (and keeping in mind that $\widecheck x = x$),
$$
\langle y', x\rangle = \langle g \widecheck y', \widecheck x\rangle \leq  \langle \widecheck y', \widecheck x\rangle  = \langle \widecheck y', x \rangle
$$
hence, $\|\widecheck y' - x\| \leq \|y'-x\|$. Therefore,  $\widecheck y' \in {\Proj}_C(x)\cap \mathcal C$. Now if we assume that $y' \notin C_G(x)[{\Proj}_C(x)\cap \mathcal C]$ then there cannot exists a 
	$q \in C_G(x)$ such that $y' = q \widecheck y'$. Lemma~\ref{lem:3} now implies that 
	the inequality $ \langle g \widecheck y', \widecheck x\rangle < \langle \widecheck y', \widecheck x\rangle$ is strict and hence $\|\widecheck y' - x\| < \|y'-x\|$
	contradicting $y' \in {\Proj}_C(x)$. Thus  $C_G(x)({\Proj}_C(x) \cap \fcham ) \supseteq {\Proj}_C(x)$. 
	For the reverse inclusion, take $\widecheck y \in {\Proj}_C (x)$ and note 
	for all $p\in C_G(x)$, $p^{-1}\in C_G(x)$ and $p^{-1}p \widecheck y = \widecheck y$, $p^{-1}\widecheck x = \widecheck x$, hence, by Lemma~\ref{lem:3} we have $\langle p\widecheck y , x\rangle = \langle \widecheck y, x\rangle$, so 
	$$
	\|\widecheck y - x\| = \|p\widecheck y - x\|, 
	$$
	and so $p\widecheck y \in {\Proj}_C(x)$. Hence, we have (i).

From the convexity and uniqueness of projection we have (iii). 
	
	It remains to show (ii). Let $y\in {\Proj}_C(x)$, and find $g\in G$ such that $g x = \widecheck x$. Then 
	\begin{equation}\label{eq:0034}
	\|y - x\| = \|g y - g x\| = \|gy - \widecheck x\|.
	\end{equation}
	At the same time,
	\begin{equation}\label{eq:0035}
	\min_{z\in C}\|z - x\| = \min_{z\in C}\|gz - gx\| =  \|gy - \widecheck x\|.
	\end{equation}
	From \eqref{eq:0034} and \eqref{eq:0035} and the invariance of $C$ we have $gy \in {\Proj}_C(\widecheck x)$. By (i) there exists $h\in C_G(x)$ and $\tilde y \in {\Proj}_C(x)\cap \mathcal C$ such that $y = g^{-1} h \tilde y$, hence by uniqueness, $\tilde y = \widecheck y$. Thus 
$$
\langle y , x \rangle = \langle g y , \widecheck x \rangle = \langle h \widecheck y , \widecheck x \rangle =  \langle \widecheck y , h^{-1}\widecheck x \rangle
	=  \langle  \widecheck y , \widecheck x \rangle
$$  
and we proved the necessary part of (ii): if $y\in {\Proj}_C(x)$, then $\widecheck y \in {\Proj}_C(\widecheck x)$ and $\langle \widecheck x, \widecheck y\rangle = \langle x,y\rangle$. 
	
The sufficient direction of (ii) is trivial: assume that $\widecheck y \in {\Proj}_C(\widecheck x)$ and $\langle \widecheck x, \widecheck y\rangle = \langle x,y\rangle$. It is not difficult to observe that $d(x,C) =d(\widecheck x,C)$ (where by $d$ we denote the distance). Moreover, from $\langle x,y\rangle  = \langle \widecheck x, \widecheck y\rangle$ we have $\|x-y\| = \|\widecheck x - \widecheck y\|$, hence, $y\in \Proj_C(x)$.    
\end{proof}

We immediately get the following corollary for the case when the projection is a singleton.

\begin{corollary} \label{cor:uniqueproj}
	Let $C \subseteq \R^n $ be a closed convex set invariant under a finite reflection group $G$, then
	\begin{equation}
	\{y\} = {\Proj}_{C} (x) \quad \iff \quad \widecheck y =   {\Proj}_{C} 
	(\widecheck x)
	\end{equation}
	moreover we have $\langle \widecheck x , \widecheck y \rangle = \langle x , y \rangle$. 
\end{corollary}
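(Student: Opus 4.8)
The plan is to deduce Corollary~\ref{cor:uniqueproj} directly from Theorem~\ref{thm:encompass}(ii) together with the uniqueness of projection onto a closed convex set. First I would note that since $C$ is closed and convex, ${\Proj}_C(z)$ is a singleton for every $z\in\R^n$ (in particular for $z=x$ and $z=\widecheck x$), so the set-valued statements in Theorem~\ref{thm:encompass}(ii) become equalities between points, and we may legitimately write $\{y\} = {\Proj}_C(x)$ and treat ${\Proj}_C(\widecheck x)$ as a single point.

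For the forward implication, suppose $\{y\} = {\Proj}_C(x)$. By Theorem~\ref{thm:encompass}(ii), $\widecheck y \in {\Proj}_C(\widecheck x)$ and $\langle \widecheck x,\widecheck y\rangle = \langle x,y\rangle$; since ${\Proj}_C(\widecheck x)$ is a singleton, this reads $\widecheck y = {\Proj}_C(\widecheck x)$, and the inner product identity is exactly the ``moreover'' claim. For the reverse implication, suppose $\widecheck y = {\Proj}_C(\widecheck x)$. The one subtlety is that to invoke the sufficiency part of Theorem~\ref{thm:encompass}(ii) we also need $\langle \widecheck x,\widecheck y\rangle = \langle x,y\rangle$, which is not part of the hypothesis of the reverse direction as stated. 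I would supply this as follows: apply the already-established forward direction to the point $x$ and its projection, i.e. let $\{y_0\} := {\Proj}_C(x)$; then $\widecheck{y_0} = {\Proj}_C(\widecheck x) = \widecheck y$, so $y$ and $y_0$ lie in the same orbit $\orbit_G(y)=\orbit_G(y_0)$, and moreover $\langle\widecheck x,\widecheck y\rangle = \langle\widecheck x,\widecheck{y_0}\rangle = \langle x,y_0\rangle$. Using Corollary~\ref{cor:3} (applied to the pair $x, y_0$, whose $\widecheck{}$-images satisfy the equality $\langle\widecheck x,\widecheck{y_0}\rangle=\langle x,y_0\rangle$) there is $h\in G$ with $\widecheck x = hx$, $\widecheck{y_0}=hy_0$; but then $y$ and $y_0$ are both sent by $h$ into the same orbit point $\widecheck y = \widecheck{y_0}$, so in fact $h y = h y_0 = \widecheck y$, giving $y = y_0$ and hence $\{y\}={\Proj}_C(x)$ with the required inner product identity.

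Alternatively, and perhaps more cleanly, I would just observe that $\widecheck y = {\Proj}_C(\widecheck x)$ forces $\langle\widecheck x,\widecheck y\rangle = \langle x,y\rangle$ automatically via Theorem~\ref{thm:encompass}(iii): since $\widecheck x\in\fcham$ and $C$ is convex, ${\Proj}_C(\widecheck x)\in C\cap\fcham$, so $\widecheck y\in\fcham$, i.e. $\widecheck y$ is already the fundamental-chamber representative and the nonexpansiveness in Lemma~\ref{lem:3} combined with $\|x-y\|\ge \dist(x,C)=\dist(\widecheck x,C)=\|\widecheck x-\widecheck y\|$ and $\langle x,y\rangle\le\langle\widecheck x,\widecheck y\rangle$ pins down equality. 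The main obstacle is precisely this bookkeeping: making sure the inner-product condition, which appears in Theorem~\ref{thm:encompass}(ii) but is dropped from the statement of the corollary's reverse direction, is recovered from convexity and uniqueness rather than silently assumed; everything else is a routine specialisation of Theorem~\ref{thm:encompass}(ii).
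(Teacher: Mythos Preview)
The paper gives no proof at all: it states the corollary as an immediate consequence of Theorem~\ref{thm:encompass}(ii), the intended reading being that the inner-product identity is part of the equivalence (exactly as in the theorem), so that convexity simply turns ``$\in$'' into ``$=$'' on both sides. Under that reading your forward direction and your final sentence already constitute the entire proof.

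Your attempt to prove the \emph{stronger} reverse implication --- deducing $\{y\}=\Proj_C(x)$ from $\widecheck y=\Proj_C(\widecheck x)$ alone, without assuming $\langle\widecheck x,\widecheck y\rangle=\langle x,y\rangle$ --- has a genuine gap, and in fact this stronger statement is false. In your first argument, Corollary~\ref{cor:3} gives $h\in G$ with $\widecheck x=hx$ and $\widecheck{y_0}=hy_0$, but nothing forces $hy=\widecheck y$; you only know $y\in\orbit(\widecheck y)$, not that the \emph{same} $h$ carries $y$ to $\widecheck y$. The step ``so in fact $hy=hy_0=\widecheck y$'' is unjustified. Your alternative argument also fails: the two inequalities $\|x-y\|\ge\|\widecheck x-\widecheck y\|$ and $\langle x,y\rangle\le\langle\widecheck x,\widecheck y\rangle$ are equivalent (since $\|x\|=\|\widecheck x\|$, $\|y\|=\|\widecheck y\|$), so they cannot together force equality. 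A concrete counterexample: take $G=\{I,R\}$ on $\R^2$ with $R(x_1,x_2)=(x_1,-x_2)$, $\fcham=\{x_2\ge0\}$, $C$ the closed unit disk, $x=(2,1)=\widecheck x$, and $y=(2,-1)/\sqrt5$. Then $\widecheck y=(2,1)/\sqrt5=\Proj_C(\widecheck x)$, yet $\Proj_C(x)=(2,1)/\sqrt5\neq y$ and $\langle x,y\rangle=3/\sqrt5\neq\sqrt5=\langle\widecheck x,\widecheck y\rangle$. So the inner-product condition cannot be dropped from the hypothesis of the reverse direction; read the ``moreover'' as part of the iff and the corollary is, as the paper says, immediate.
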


\section{Proximal normals and subdifferentials}\label{sec:normals}

Some of the relations obtained in the previous sections for the projections on invariant sets can be used to study proximal normals to sets which are not necessarily convex. This in turn can be applied to epigraphs to deduce results about subdifferentials. 

\begin{definition}[Proximal normal cone] Given any set $C\subset \R^n$, the proximal normal cone to $C$ at $x\in S$ is the cone 
	$$
	\hat N_C(x) = \{\lambda(y-x):\; \lambda \geq 0 , \; x \in {\Proj}_C(y) \}.
	$$
\end{definition}
The proximal cone consists of all points along which we project on a given point $x\in C$. We have $y \in \hat{N}_{C} (x)$ iff
there exists $\alpha >0$ such that we have $x \in \Proj_{C} (x + \alpha y)$ (in which case 
for any smaller $\alpha$ the same inclusion holds). 

The following theorem gives a relation between the proximal normals at an arbitrary point of the set $C $ and their counterparts inside the fundamental chamber. This result holds when $C$ satisfies the condition that convex hulls of all orbits of points in $C$ belong to $C$. Observe that this condition is significantly more general than convexity. The shapes in Fig.~\ref{fig:dumbbell} 
\begin{figure}[ht]
	\centering
	\includegraphics[scale=0.5]{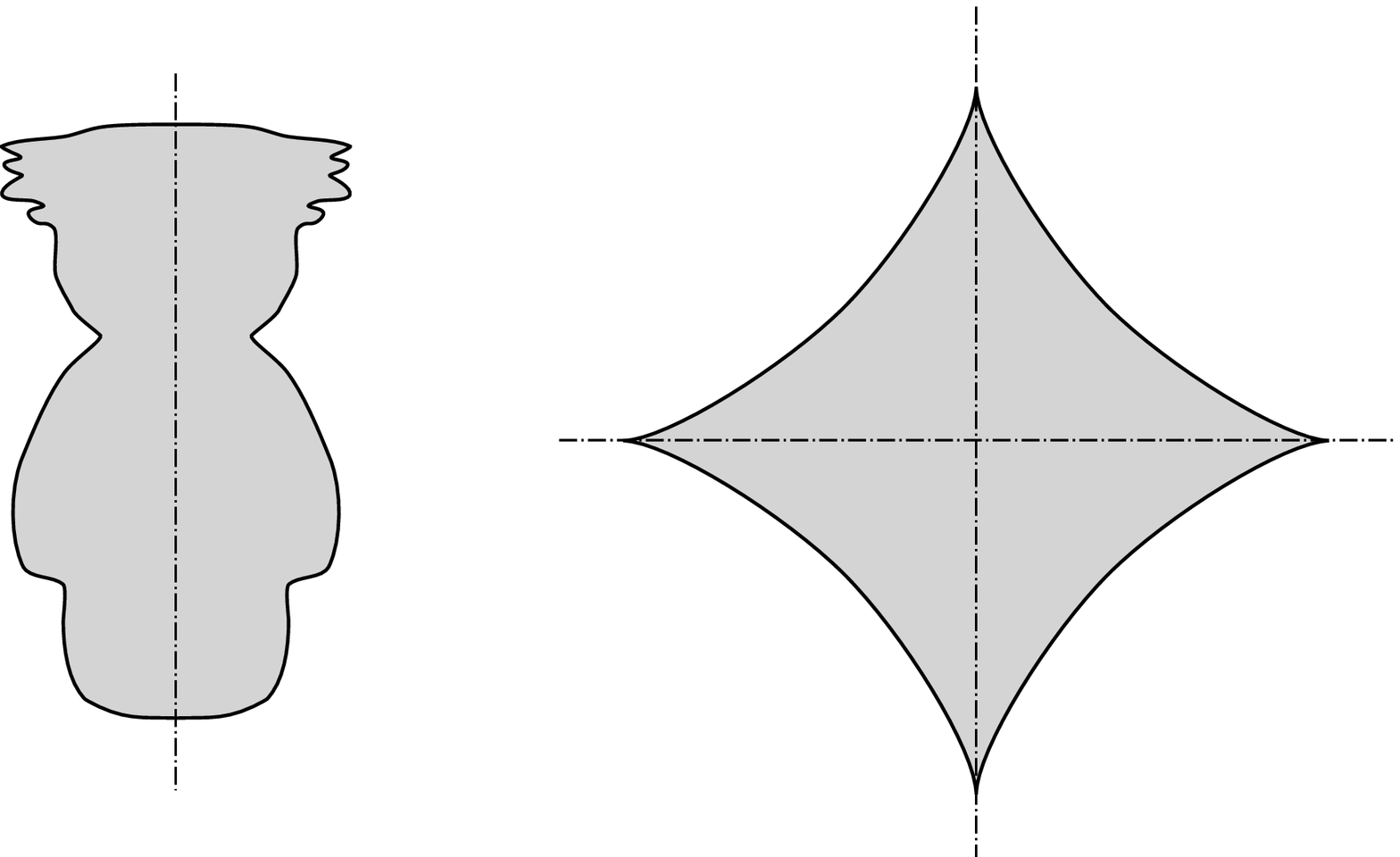}
	\includegraphics[scale=0.16]{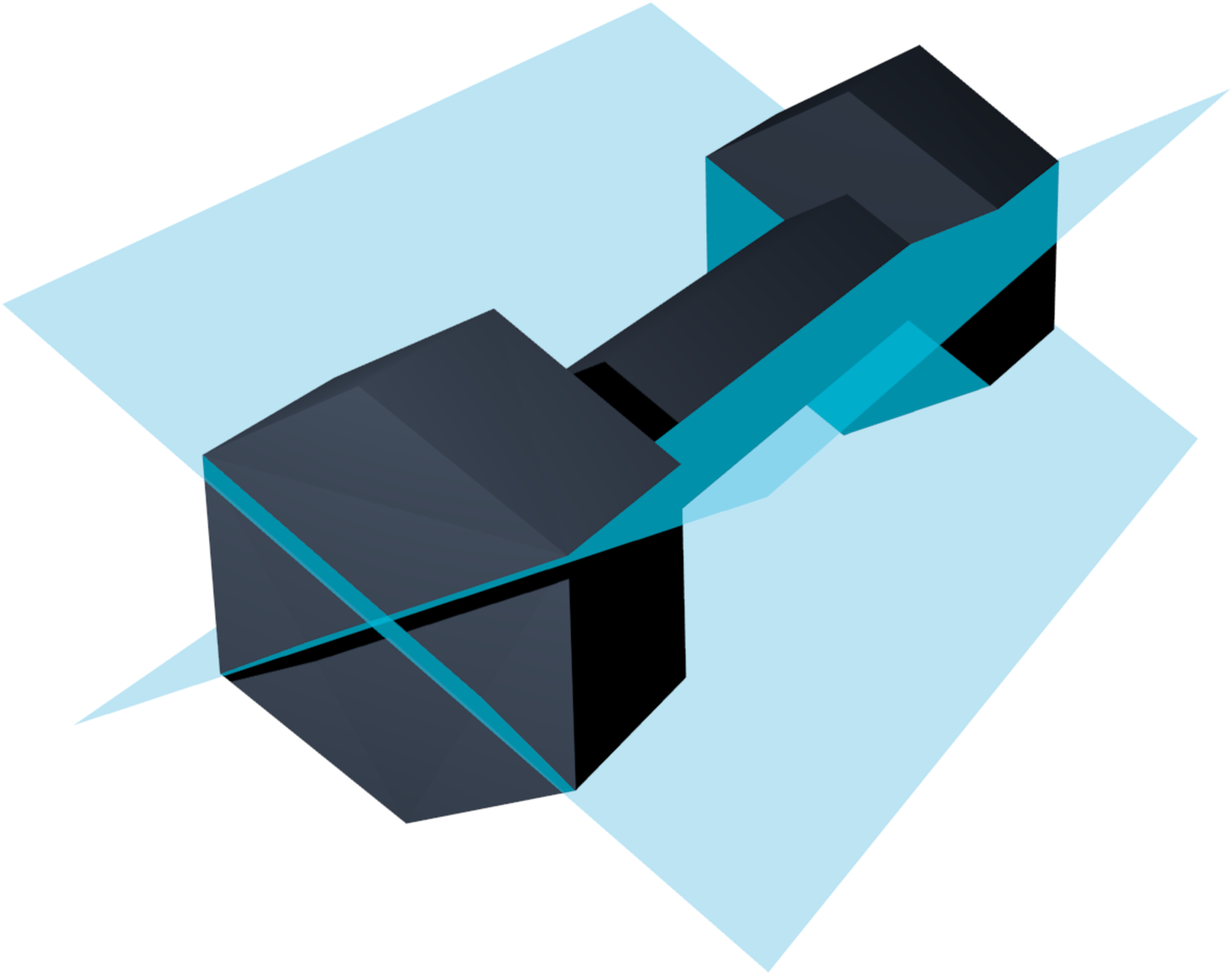}
	\caption{Sets that satisfy property \eqref{eq:propertyA} with the reflection group generated by mirrors shown as the dash dotted lines and the semi transparent planes}
		\label{fig:dumbbell}
\end{figure}
are examples of sets that are not convex, but satisfy the conditions of the next theorem.
\begin{theorem}\label{thm:3}
	Suppose $G$ is a finite reflection group and $C$ is a $G$-invariant set with the following property
\begin{equation}\label{eq:propertyA}
\forall x \in \bd C \text{ we have }  \co \orbit (x) \subseteq C.
\end{equation}	
	Then $y \in \hat{N}_{C} (x)$ iff  $\widecheck y \in \hat{N}_{C} (\widecheck x )$
	and $\langle \widecheck x , \widecheck y \rangle = \langle x , y \rangle$. 
\end{theorem}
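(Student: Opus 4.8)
The plan is to mimic the proof of Theorem~\ref{thm:encompass}(ii), replacing the projection operator by the proximal normal cone and using property \eqref{eq:propertyA} at exactly the point where convexity of $C$ was previously exploited. Recall that $y\in\hat N_C(x)$ iff there exists $\alpha>0$ with $x\in\Proj_C(x+\alpha y)$. So the natural strategy is: given $y\in\hat N_C(x)$, pick such an $\alpha$, set $w:=x+\alpha y$, apply Theorem~\ref{thm:encompass}(ii) to the pair $(w,x)$ — note $x\in\Proj_C(w)$ — to transport the projection relation into the fundamental chamber, and then read off the proximal normal relation there. The key bookkeeping is that $x\in\Proj_C(w)$ means $x=\widecheck{(x)}$ is \emph{a} projection point, not that $x$ lies in $\fcham$; so one should first move everything by a single $g\in G$ with $g x=\widecheck x$, observe $g w\in\Proj_C^{-1}$-preimage of $g x$, i.e. $\widecheck x\in\Proj_C(gw)$, and identify $\widecheck{(gw)}$ and $\widecheck{(gx)}=\widecheck x$ together with the inner-product equality $\langle gw,gx\rangle=\langle w,x\rangle$.

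Concretely, first I would show the forward direction. Write $w=x+\alpha y$ with $x\in\Proj_C(w)$. Choose $g\in G$ with $gx=\widecheck x$. By Proposition~\ref{lem:ProjAction}, $\widecheck x=gx\in\Proj_C(gw)$, so $gw-\widecheck x=\alpha\, gy\in\hat N_C(\widecheck x)$. Now $\widecheck x\in\fcham$, and I claim $gw\in\fcham$ as well: this is where property \eqref{eq:propertyA} enters. Indeed, since $\widecheck x\in\Proj_C(gw)$ is a \emph{closest} point of $C$, and $\co\orbit(\widecheck x)\subseteq C$ (note $\widecheck x\in\bd C$ unless $C=\R^n$, a trivial case), the point $gw$ cannot be "separated" from $\widecheck x$ by any mirror $h_u$, $u\in U$: if $\langle gw,u\rangle<0$ for some positive root $u$, then reflecting would bring a point of $C_G(\widecheck x)$-orbit — hence of $\co\orbit(\widecheck x)\subseteq C$ — strictly closer to $gw$, contradicting $\widecheck x\in\Proj_C(gw)$. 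Hence $gw\in\fcham$, so $gw=\widecheck{(gw)}$, and since $\widecheck x=\widecheck{(gx)}$, we get by Theorem~\ref{thm:encompass}(ii) (applied to the statement $x\in\Proj_C(w)$, with the roles of the two arguments as needed) that $\langle gw,\widecheck x\rangle=\langle w,x\rangle$, i.e. $\langle x,y\rangle=\tfrac1\alpha(\langle w,x\rangle-\|x\|^2)=\tfrac1\alpha(\langle gw,\widecheck x\rangle-\|\widecheck x\|^2)=\langle\widecheck x,gy\rangle$. Finally $gy\in\orbit(y)$ and $gy\in\hat N_C(\widecheck x)\subseteq\fcham$ would give $gy=\widecheck y$ provided $\hat N_C(\widecheck x)\subseteq\fcham$; more carefully, I would instead invoke Theorem~\ref{thm:encompass}(ii) directly on the projection identity $\widecheck x\in\Proj_C(gw)$ to conclude $gy$ has the chamber representative $\widecheck y$ and $\langle\widecheck x,\widecheck y\rangle=\langle x,y\rangle$, which is exactly the claim. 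The reverse direction is the easy one: if $\widecheck y\in\hat N_C(\widecheck x)$ and $\langle\widecheck x,\widecheck y\rangle=\langle x,y\rangle$, then by Corollary~\ref{cor:3} there is a single $h\in G$ with $hx=\widecheck x$, $hy=\widecheck y$; pick $\alpha>0$ with $\widecheck x\in\Proj_C(\widecheck x+\alpha\widecheck y)$, apply $h^{-1}$ and Proposition~\ref{lem:ProjAction} to get $x\in\Proj_C(x+\alpha y)$, hence $y\in\hat N_C(x)$.

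The main obstacle, and the step deserving the most care, is the claim that a point $w$ with a closest point $\widecheck x\in\fcham\cap\Proj_C(w)$ must itself lie in $\fcham$ — equivalently, that $\widecheck{w}$ and $\widecheck x$ are "aligned" in the sense of the inner-product equality in Theorem~\ref{thm:encompass}(ii). This is precisely where hypothesis \eqref{eq:propertyA} is indispensable and where convexity was used before; the argument is the reflection/averaging trick already seen in Lemma~\ref{lem:posoutside}, but one must be careful that it is the orbit convex hull $\co\orbit(\widecheck x)\subseteq C$ — not $C$ itself — that supplies the competitor point closer to $w$, and that $\widecheck x\in\bd C$ (if $\widecheck x\in\operatorname{int}C$ then $\hat N_C(\widecheck x)=\{0\}$ and everything is trivial). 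A secondary technical point is matching the argument order of Theorem~\ref{thm:encompass}(ii): there the statement is "$y\in\Proj_C(x)$ iff $\widecheck y\in\Proj_C(\widecheck x)$ and $\langle\widecheck x,\widecheck y\rangle=\langle x,y\rangle$", so here one sets the "$y$" of that theorem to be our $x$ and the "$x$" of that theorem to be our $w$, and then $\widecheck{(\text{our }x)}=\widecheck x$ forces, via the theorem, $\widecheck{(\text{our }w)}$ to satisfy the inner-product identity — delivering the alignment without re-proving it from scratch.
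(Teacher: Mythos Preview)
Your setup and the converse direction are fine, and they are essentially what the paper does: start from $x\in\Proj_C(x+\alpha y)$, push everything through $g\in G$ with $gx=\widecheck x$ to obtain $\widecheck x\in\Proj_C(\widecheck x+\alpha\, gy)$ (so $gy\in\hat N_C(\widecheck x)$), and for the converse use Corollary~\ref{cor:3} plus Proposition~\ref{lem:ProjAction}. The gap is in the forward direction, at the passage from $gy$ to $\widecheck y$.

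Your reflection argument does \emph{not} prove $gw\in\fcham$, and in fact $\hat N_C(\widecheck x)\subseteq\fcham$ is false in general. The argument ``if $\langle gw,u\rangle<0$ then $h_u\widecheck x\in\co\orbit(\widecheck x)\subseteq C$ is strictly closer to $gw$'' uses $\langle\widecheck x,u\rangle>0$ to get a strict improvement; when $u$ is a root with $h_u\in C_G(\widecheck x)$ (equivalently $\langle\widecheck x,u\rangle=0$) one has $h_u\widecheck x=\widecheck x$ and there is no competitor. Concretely, let $G=S_2$ act on the first two coordinates of $\R^3$ and take $C=\epi(\max(x_1,x_2))$, which is convex and $G$-invariant (so \eqref{eq:propertyA} holds). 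At $\widecheck x=(0,0,0)\in\fcham$ the vector $(0,1,-1)$ lies in $\hat N_C(\widecheck x)$ but not in the chamber $\{x_1\ge x_2\}\times\R$; correspondingly $gw=\alpha(0,1,-1)\notin\fcham$. Your fallback---reapplying Theorem~\ref{thm:encompass}(ii) to $\widecheck x\in\Proj_C(gw)$---only yields $\widecheck x\in\Proj_C(\widecheck w)$ with $\widecheck w=\widecheck{x+\alpha y}$, and there is no reason $\widecheck w-\widecheck x$ should equal $\alpha\widecheck y$; you are back to the same obstruction.

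What the paper does instead is to show that $gy$ and $\widecheck y$ differ only by an element of the stabiliser $C_G(\widecheck x)$. Property~\eqref{eq:propertyA} gives $\co\orbit(\widecheck x)\subseteq C$, and since $\widecheck x$ is a nearest point of $C$ to $\widecheck x+\alpha gy$ it is a fortiori a nearest point of the convex subset $\co\orbit(\widecheck x)$; hence $gy\in N_{\co\orbit(\widecheck x)}(\widecheck x)$, i.e.\ $\langle gy,z\rangle\le\langle gy,\widecheck x\rangle$ for every $z\in\orbit(\widecheck x)$. Running $z$ over the orbit and using that $G$ consists of isometries gives $\langle\widecheck y,\widecheck x\rangle\le\langle gy,\widecheck x\rangle$. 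Lemma~\ref{lem:3} gives the reverse inequality, so equality holds, and the equality case of Lemma~\ref{lem:3} produces $h\in C_G(\widecheck x)$ with $hgy=\widecheck y$. Applying $h$ to $\widecheck x\in\Proj_C(\widecheck x+\alpha gy)$ then yields $\widecheck x\in\Proj_C(\widecheck x+\alpha\widecheck y)$ and $\langle\widecheck x,\widecheck y\rangle=\langle\widecheck x,gy\rangle=\langle x,y\rangle$. This stabiliser step is the missing idea in your proposal.
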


\begin{proof}
	We have $y \in \hat{N}_{C} (x)$ iff $x \in {\Proj}_{C} ( x + \alpha y )$ for all sufficiently small $\alpha >0$. Applying Theorem \ref{thm:encompass} (ii) we have $\widecheck x  \in {\Proj}_{C} ( \widecheck{x + \alpha y})$ and $\langle \widecheck x , \widecheck{x + \alpha y} \rangle = \langle x, x + \alpha y  \rangle$. Thus by Corollary~\ref{cor:3} there exists $g \in G$ 
	with $ g x = \widecheck x$ and $g (x + \alpha y ) = \widecheck{x + \alpha y }$. 
	Thus $\widecheck{x + \alpha y } = \widecheck x + \alpha g y$ and so
	$\widecheck x \in {\Proj}_{C} ( \widecheck x + \alpha  g y )$. It remains to show that 
	there exists $h \in C_{G} (\widecheck x )$ with $h g y = \widecheck y$ since then
	it would follow that  $\widecheck x = h \widecheck x   \in {\Proj}_{C} ( h \widecheck x + \alpha h g y) =  {\Proj}_{C} (\widecheck x + \alpha  \widecheck y ) $ (thus $\widecheck y \in \hat N_C(x)$)
	and $\langle \widecheck x , \widecheck x + \alpha g y \rangle = \langle x, x + \alpha y  \rangle$ renders $\langle \widecheck x , \widecheck y \rangle = \langle x, y \rangle$. 
	
	To this end we recall that $G$ is a group of isometries and that there exists $k \in G$
	such that $ky = \widecheck{y}$. Now condition \eqref{eq:propertyA} and the fact that $\widecheck{x} 
	\in \bd C$ implies $\widecheck{x} \in \bd \co
	\orbit(\widecheck{x})$ and so $gy \in {N}_{\co \orbit(\widecheck{x})} (\widecheck{x})$ (the normal cone of convex analysis). Hence for all $z \in \co
	\orbit(\widecheck{x})$ we have 
	\begin{eqnarray*}
		\langle g y , z - \widecheck{x} \rangle & \leq & 0 \text{ and so } \\
		\langle  y , z \rangle = \langle gy ,g z \rangle &\leq & \langle gy , \widecheck{x} \rangle 
		\text{ for all $z \in \orbit( \widecheck{x})$ } \\
		\text{hence } \langle \widecheck{y} , z \rangle  = \langle y , k^{-1} z \rangle & \leq & \langle gy , \widecheck{x} \rangle \text{ for all $z \in \orbit(\widecheck{x})$}  \\
		\text{and so } \langle \widecheck{y} , \widecheck{x} \rangle & \leq & 
		\langle g y , \widecheck{x} \rangle \text{ as $\widecheck{x} \in \orbit( \widecheck{x})$}. 
	\end{eqnarray*}
	By Lemma~\ref{lem:3} we have 
	the existence of $h \in C_{G} (\widecheck x )$ such that $h g k  \widecheck y= \widecheck y$ or $ h g y = \widecheck y$ as required. 
	
	Conversely when $\langle \widecheck x , \widecheck y \rangle = \langle x , y \rangle$ 
	we may apply Corollary~\ref{cor:3} to obtain $h \in G$ such that $h \widecheck x =  x$
	and  $h \widecheck y =  y$, it then follows from  $\widecheck y \in \hat{N}_{C} (\widecheck x )$ that $y \in \hat{N}_{C} ( x )$.
\end{proof}

When $f$ is $G$-invariant we have $f(g x) = f(x)$ for all $g \in G$ and so the 
we can extend the application of $G$ to $X$ to $X \times \R$
and the $\epi f$  that is consistent
with the invariance of the function $f$. Namely, for $g \in G$ we can extend $G$ 
to $X \times \R$ via 
\[
g (x, \alpha) = (gx ,\alpha) . 
\]
This extended transformation is also an isometry as 
\begin{equation}
\label{eq:exttransform}
\langle g (x, \alpha) , g (y, \beta) \rangle 
= \langle g (x),  g (y) \rangle + \alpha \beta = \langle x,  y \rangle + \alpha \beta 
= \langle  (x, \alpha) , (y, \beta) \rangle .
\end{equation}

\begin{lemma}
	Suppose $G$ is a finite reflection group and $f$ is a $G$ invariant function then 
	$\operatorname{epi} f$ is a $G$ invariant set with respect to the extended 
	group transformation.
\end{lemma}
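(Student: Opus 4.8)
The plan is to verify the two defining properties of an epigraph that is $G$-invariant under the extended transformation: first that $\epi f$ is a subset of $X \times \R$ on which the extended action is well-defined (which is immediate, since $g(x,\alpha) = (gx,\alpha)$ makes sense for every $g \in G$ and every $(x,\alpha)$), and second that $g\,\epi f = \epi f$ for every $g \in G$. Recall that $\epi f = \{(x,\alpha) \in \R^n \times \R \mid \alpha \geq f(x)\}$. The argument is a short chain of equivalences.

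First I would fix an arbitrary $g \in G$ and an arbitrary point $(x,\alpha) \in \epi f$, so that $\alpha \geq f(x)$. By $G$-invariance of $f$ we have $f(gx) = f(x)$, hence $\alpha \geq f(gx)$, which says precisely that $g(x,\alpha) = (gx,\alpha) \in \epi f$. This shows $g\,\epi f \subseteq \epi f$. Since this holds for every element of the group, applying the inclusion to $g^{-1}$ gives $g^{-1}\epi f \subseteq \epi f$, and then applying $g$ to both sides (note $g$ is a bijection on $\R^n \times \R$) yields $\epi f \subseteq g\,\epi f$. Combining the two inclusions gives $g\,\epi f = \epi f$, as desired.

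There is essentially no obstacle here: the result is a direct consequence of the definitions, the $G$-invariance hypothesis on $f$, and the fact that the extended transformation leaves the last coordinate untouched (so that the epigraph inequality is preserved verbatim). The only point worth stating explicitly, and the closest thing to a subtlety, is that each $g$ acts as a bijection of $\R^n \times \R$ onto itself — this is what lets us upgrade the one-sided inclusion to an equality — and this follows since $g \in G \subset O(n)$ is invertible with $g^{-1} \in G$, and the extension preserves invertibility. One may also simply remark that the extended transformations form a group of isometries of $\R^n \times \R$ by \eqref{eq:exttransform}, so in particular each is a bijection.

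\begin{proof}
	Fix $g \in G$. Since the extended transformation acts as $g(x,\alpha) = (gx,\alpha)$, it is a bijection of $\R^n \times \R$ onto itself, with inverse the extension of $g^{-1} \in G$. Let $(x,\alpha) \in \epi f$, so that $\alpha \geq f(x)$. By $G$-invariance of $f$ we have $f(gx) = f(x) \leq \alpha$, so $g(x,\alpha) = (gx,\alpha) \in \epi f$. Hence $g\,\epi f \subseteq \epi f$. Applying this inclusion with $g$ replaced by $g^{-1}$ gives $g^{-1}\epi f \subseteq \epi f$, and applying the bijection $g$ to both sides yields $\epi f \subseteq g\,\epi f$. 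Therefore $g\,\epi f = \epi f$, and since $g \in G$ was arbitrary, $\epi f$ is $G$-invariant with respect to the extended group transformation.
\end{proof}
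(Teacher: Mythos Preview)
Your proof is correct and follows essentially the same approach as the paper's: show $g\,\epi f \subseteq \epi f$ directly from the invariance identity $f(gx)=f(x)$, and then obtain the reverse inclusion from the group structure (the paper simply says ``as $G$ is a group'', while you spell out the $g^{-1}$ step explicitly).
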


\begin{proof}
	
	Take any $(x,y)\in \epi f$, then $g(x,y) = (gx, y)$, 
	where $y\geq f(x) = f(gx)$, hence, $g(x,y) \in \epi f$, and $g \epi f \subset \epi f$. This also yields the reverse inclusion, as $G$ is a group.
\end{proof}

We are now able to extend Theorem~\ref{thm:main:1} to non-convex function and
the proximal subdifferential. The proximal subdifferential can be generated via normal vectors as a slice of the proximal normal cone to the epigraph of the function, given by
\[
\hat{N}_{\operatorname{epi}f} (x, f(x)) := \{ (y , \beta ) \mid \exists \alpha >0 
\text{ such that }  (x, f(x)) \in {\Proj}_{\operatorname{epi}f} ((x, f(x))  + \alpha (y, \beta) \}. 
\]
The proximal subdifferential (resp. singular proximal subdifferential) is given by 
\begin{eqnarray*}
	\partial_p f (x) &:= & \{ y \mid (y , -1 ) \in \hat{N}_{\operatorname{epi}f} (x, f(x))  \} \\
	\text{resp. }  \partial^{\infty}_p f (x) &:= & \{ y \mid (y , 0 ) \in \hat{N}_{\operatorname{epi}f} (x, f(x))  \} .
\end{eqnarray*}

\begin{theorem}\label{thm:proximalsub} Let $f:\R^n\to \R_{+\infty}$ be invariant under a finite reflection group $G$ and assume that $\epi f $ satisfies property \eqref{eq:propertyA} with respect to the extended transformation \eqref{eq:exttransform}. Then $y\in \partial_p f(x)$ if and only if 
	\begin{equation}\label{eq:mainprox}
	\widecheck  y \in \partial_p f(\widecheck  x) \quad \text{and} \quad \langle \widecheck  x, \widecheck  y\rangle = \langle x,y\rangle.
	\end{equation}
Furthermore $y\in \partial^{\infty}_p f(x)$ if and only if 
	\begin{equation}
	\widecheck  y \in \partial^{\infty}_p f(\widecheck  x) \quad \text{and} \quad \langle \widecheck  x, \widecheck  y\rangle = \langle x,y\rangle.
	\end{equation}
\end{theorem}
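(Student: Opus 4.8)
\medskip
\noindent\textbf{Proof strategy.} The plan is to deduce this from Theorem~\ref{thm:3} applied to the set $C := \epi f \subset \R^{n}\times\R$ under the extended group action $g(x,\alpha)=(gx,\alpha)$, using that, by definition, $\partial_p f(x)$ and $\partial^{\infty}_p f(x)$ are the slices $\{y:(y,-1)\in\hat{N}_{\epi f}(x,f(x))\}$ and $\{y:(y,0)\in\hat{N}_{\epi f}(x,f(x))\}$. By the lemma just proved, $\epi f$ is invariant under the extended action, and by hypothesis it satisfies \eqref{eq:propertyA}; assuming as usual that $f$ is lower semicontinuous, $\epi f$ is closed and the projection machinery behind Theorems~\ref{thm:encompass} and~\ref{thm:3} is available.

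First I would pin down the structure of the extended action on $\R^{n}\times\R$. Each extended generator $(x,\alpha)\mapsto(H_u x,\alpha)$ is the Householder reflection about the hyperplane of $\R^{n}\times\R$ with unit normal $(u,0)$, so the extended group is again a finite reflection group, with root system $\{(u,0)\mid u\in\Phi\}$. If $\ell:\R^{n}\to\R$ is a linear functional witnessing the positive root system $U$ of $G$, then $(x,\alpha)\mapsto\ell(x)$ witnesses $\{(u,0)\mid u\in U\}$ as a positive root system of the extended group, and the corresponding fundamental chamber is $\{(x,\alpha)\mid\langle u,x\rangle\ge 0\ \forall u\in U\}=\fcham\times\R$. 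Since the orbit of $(x,\alpha)$ under the extended action is $\orbit_G(x)\times\{\alpha\}$, Lemma~\ref{lem:fchamunique} forces $\widecheck{(x,\alpha)}=(\widecheck x,\alpha)$, where $x\mapsto\widecheck x$ on the right is exactly the reduction operator of $G$ on $\R^{n}$ that appears in the statement.

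Then I would simply translate. Fix $x$ with $f(x)$ finite; note $(x,f(x))\in\bd\epi f$. By definition $y\in\partial_p f(x)$ iff $(y,-1)\in\hat{N}_{\epi f}(x,f(x))$, and by Theorem~\ref{thm:3} applied in $\R^{n}\times\R$ this holds iff $\widecheck{(y,-1)}\in\hat{N}_{\epi f}\bigl(\widecheck{(x,f(x))}\bigr)$ and $\langle\widecheck{(x,f(x))},\widecheck{(y,-1)}\rangle=\langle(x,f(x)),(y,-1)\rangle$. Using the previous step, $\widecheck{(y,-1)}=(\widecheck y,-1)$ and $\widecheck{(x,f(x))}=(\widecheck x,f(x))$; and since $\widecheck x\in\orbit_G(x)$ and $f$ is $G$-invariant, $f(\widecheck x)=f(x)$, so $(\widecheck x,f(x))=(\widecheck x,f(\widecheck x))$. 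Hence the normal-cone condition becomes exactly $\widecheck y\in\partial_p f(\widecheck x)$, and the inner-product condition becomes $\langle\widecheck x,\widecheck y\rangle-f(\widecheck x)=\langle x,y\rangle-f(x)$, which by $f(\widecheck x)=f(x)$ is equivalent to $\langle\widecheck x,\widecheck y\rangle=\langle x,y\rangle$; this is \eqref{eq:mainprox}. The singular case is verbatim the same with $-1$ replaced by $0$, the inner-product condition then reading $\langle\widecheck x,\widecheck y\rangle+0=\langle x,y\rangle+0$.

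The only step needing genuine care is the bookkeeping of the second paragraph: confirming that the extended transformation really generates a finite reflection group whose fundamental chamber is $\fcham\times\R$, so that the operation $x\mapsto\widecheck x$ appearing in the statement coincides with the reduction operator of the extended group on $\R^{n}\times\R$ and Theorem~\ref{thm:3} may legitimately be invoked on $\epi f$, together with the standing lower semicontinuity of $f$ needed for the projection theorems. Once these are granted, the result is a direct reading of Theorem~\ref{thm:3}.
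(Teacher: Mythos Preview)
Your proposal is correct and follows essentially the same route as the paper: apply Theorem~\ref{thm:3} to $C=\epi f$ under the extended action, then read off the two conditions using $\widecheck{(x,\alpha)}=(\widecheck x,\alpha)$ and $f(\widecheck x)=f(x)$. You in fact supply more justification than the paper does---the identification of the extended group as a finite reflection group with fundamental chamber $\fcham\times\R$, and the need for lower semicontinuity so that $\epi f$ is closed---whereas the paper's proof leaves these implicit and simply invokes Theorem~\ref{thm:3} directly.
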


\begin{proof}
	We have $ y \in \partial_p f (x) $ iff 
	$(y, -1 ) \in \hat{N}_{\epi f} (x, f(x))$. Applying Theorem~\ref{thm:3} 
	we have this true iff $(\widecheck y, -1 ) \in \hat{N}_{\operatorname{epi}f} (\widecheck x, f(\widecheck x))$ and $\langle (\widecheck y, -1 ) , (\widecheck x, f(\widecheck x)) \rangle 
	=\langle ( y, -1 ) , (  x, f(  x)) \rangle  $. The former identity is equivalent to 
	$\widecheck y \in \partial_p f (\widecheck x)$ and the second equivalent to 
	$\langle \widecheck  x, \widecheck  y\rangle = \langle x,y\rangle$. The second assertion follows similarly. 
\end{proof}

\begin{remark}
	The classic counter example to such a result holding for arbitrary group invariant function
	is the function $f(x_1, x_2, \dots, x_n)=x_1\times x_2\dots\times x_n$ under the group of permutations
	${\bf P} (n)$. 
\end{remark}

The set of functions that satisfy the conditions of Theorem~\ref{thm:proximalsub} is significantly broader than convex, and in particular includes Schur convex functions defined with respect to the preordering induced by group majorisation. 

\subsection{Schur convex functions}\label{sec:Schur}

Recall that a function $f$ is called Schur convex with respect to a preordering $\succeq$  on $\R^n$ if it is isotone with respect to this preordering: 
$$
f(x) \geq f(w) \qquad \text{ whenever } \qquad x \succeq w.
$$
Of particular interest to us is the partial order on the fundamental chamber $\fcham$  induced by {\em group majorisation} \cite{NiezgodaSchur,Niezgoda1998}.

\begin{definition}
	The group majorisation w.r.t. the finite reflection group $G$ is the preordering on $
	\R^n$ defined by 
	$$
	 x \succeq_G y \quad \text{ iff } y \in \co \orbit(x),
	$$ 
	where $\co O(x)$ is the convex hull of the orbit of $x$ under the action of $G$.
\end{definition}

Schur convex functions are a broader class than convex functions, and we show in what follows that all $G$-invariant pseudo-convex functions are Schur convex.

It is not difficult to observe that Schur convex functions defined via the $G$-preordering have the property that 
$$
\co \orbit (x) \subseteq \operatorname{lev}_{\alpha} f :=
\{ y \mid f(y) \leq \alpha \}\quad \text{ for all } \quad \alpha \geq f(x),
$$
and hence we can show that they satisfy the conditions of Theorem~\ref{thm:proximalsub}. 

\begin{lemma}
	Let $f:\R^n\to \R_{+\infty}$ be Schur convex and invariant under a finite reflection group $G$. Then the subset $\operatorname{epi} f \subseteq \R^{n+1}$ satisfies property \eqref{eq:propertyA} of
	Theorem~\ref{thm:3} with respect to the extended group action. 
\end{lemma}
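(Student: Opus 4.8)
The plan is to verify property \eqref{eq:propertyA} for $\epi f$ directly: take an arbitrary boundary point $(x,\alpha)\in\bd\epi f$ and show that $\co\orbit(x,\alpha)\subseteq\epi f$, where here $\orbit$ and $\co$ are taken in $\R^{n+1}$ and the group acts via the extended transformation $g(x,\alpha)=(gx,\alpha)$. The key observation is that the extended action fixes the last coordinate, so the orbit of $(x,\alpha)$ is exactly $\orbit(x)\times\{\alpha\}$, and hence $\co\orbit(x,\alpha)=\co\orbit(x)\times\{\alpha\}$. Thus it suffices to prove that for every $z\in\co\orbit(x)$ we have $(z,\alpha)\in\epi f$, i.e. $f(z)\le\alpha$.

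First I would reduce to the case $\alpha=f(x)$: since $(x,\alpha)\in\epi f$ we have $\alpha\ge f(x)$, and once we know $f(z)\le f(x)$ for all $z\in\co\orbit(x)$ then a fortiori $f(z)\le\alpha$, so $(z,\alpha)\in\epi f$. (In fact for the boundary point the natural case is $\alpha=f(x)$, but we may as well argue for all $\alpha\ge f(x)$ and not even use that $(x,\alpha)$ is on the boundary — property \eqref{eq:propertyA} will then hold at every point of $\epi f$, which is more than enough.) So the heart of the matter is: if $f$ is $G$-invariant and Schur convex with respect to $\succeq_G$, then $f(z)\le f(x)$ for all $z\in\co\orbit(x)$. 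But this is immediate from the definition of Schur convexity and of group majorisation: $z\in\co\orbit(x)$ means precisely $x\succeq_G z$, and isotonicity of $f$ with respect to $\succeq_G$ gives $f(x)\ge f(z)$. This is exactly the observation already displayed in the text, namely $\co\orbit(x)\subseteq\operatorname{lev}_\alpha f$ for all $\alpha\ge f(x)$.

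Putting the pieces together: given $(x,\alpha)\in\epi f$ and $w\in\co\orbit(x,\alpha)$, write $w=(z,\alpha)$ with $z\in\co\orbit(x)$; then $x\succeq_G z$, so $f(z)\le f(x)\le\alpha$, hence $w\in\epi f$. Therefore $\co\orbit(x,\alpha)\subseteq\epi f$, which is property \eqref{eq:propertyA} for $\epi f$. I do not anticipate a serious obstacle here; the only point that needs a word of care is the identification $\co\orbit(x,\alpha)=\co\orbit(x)\times\{\alpha\}$, which relies on the extended action being coordinate-wise on the $\R$-factor (acting as the identity there) — this is exactly how the extension was defined in \eqref{eq:exttransform}, so convex combinations in $\R^{n+1}$ keep the last coordinate equal to $\alpha$. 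Everything else is a direct unwinding of the definitions of Schur convexity and group majorisation.
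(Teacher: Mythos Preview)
Your proof is correct and follows essentially the same approach as the paper: both identify $\co\orbit(x,\alpha)=\co\orbit(x)\times\{\alpha\}$ via the extended action and then invoke Schur convexity to conclude $f(z)\le f(x)\le\alpha$ for every $z\in\co\orbit(x)$. Your version is slightly cleaner in that you argue for arbitrary $(x,\alpha)\in\epi f$ rather than assuming a boundary point has the form $(x,f(x))$, but the substance is the same.
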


\begin{proof}
	Take an arbitrary $(x, f(x)) \in \operatorname{bd} \operatorname{epi} f$ and consider the 
	$(y, \alpha) \in \co \orbit (x, f(x))$. Then there exists positive numbers
	$\lambda_i$ summing to unity and $g_i \in G$ for $i=1,\dots,k$ such that $(y, \alpha) 
	=\sum_{i=1}^k \lambda_i (g_i x, f(x))$. That is $y = \sum_{i=1}^k \lambda_i g_i x 
	\in \co \orbit (x)$ and $\alpha = f(x)$. As $f$ is Schur convex we have 
	$f(y) \leq f(x)=\alpha$  implying $(y , \alpha) \in 
	\operatorname{epi} f$. 
\end{proof}

Recall that pseudo-convex functions are defined as functions with convex level sets. We have the following result. 

\begin{proposition}
	Let $f: \R^n \to \R_{+\infty}$ be a $G$ invariant pseudo-convex function then $f$ is Schur convex with respect to the group preordering. 
\end{proposition}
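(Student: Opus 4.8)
The plan is to unwind the two definitions and glue them with one elementary fact about convex sets: a convex set containing a set contains its convex hull. Fix arbitrary $x,y\in\R^n$ with $x\succeq_G y$; by the definition of group majorisation this means exactly $y\in\co\orbit(x)$, and the goal is to deduce $f(y)\le f(x)$.

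First I would set $\alpha:=f(x)$ and consider the sublevel set $\operatorname{lev}_{\alpha} f=\{z\mid f(z)\le\alpha\}$. Since $f$ is $G$-invariant, $f(gx)=f(x)=\alpha$ for every $g\in G$, so every point of the orbit lies in this sublevel set, i.e. $\orbit(x)\subseteq\operatorname{lev}_{\alpha} f$. Note in particular that $\operatorname{lev}_{\alpha} f$ is nonempty, as it contains $x$, so the convexity hypothesis below is not vacuous.

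Next I would invoke pseudo-convexity of $f$: by definition the sublevel set $\operatorname{lev}_{\alpha} f$ is convex. A convex set that contains a subset contains the convex hull of that subset, so $\co\orbit(x)\subseteq\operatorname{lev}_{\alpha} f$. Since $y\in\co\orbit(x)$, this gives $y\in\operatorname{lev}_{\alpha} f$, that is $f(y)\le\alpha=f(x)$. As $x,y$ with $x\succeq_G y$ were arbitrary, this is precisely the statement that $f$ is isotone with respect to $\succeq_G$, i.e. Schur convex with respect to the group preordering.

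There is essentially no obstacle here — the proposition is a direct consequence of the definitions — so the only points worth a sentence are the degenerate case $f(x)=+\infty$ (where $\operatorname{lev}_{+\infty}f=\R^n$ and the inequality is trivial) and the nonemptiness remark above, which I would mention in passing.
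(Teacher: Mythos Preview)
Your proof is correct and follows essentially the same route as the paper: use $G$-invariance to place the orbit $\orbit(x)$ inside a sublevel set, use pseudo-convexity (convex sublevel sets) to absorb $\co\orbit(x)$, and conclude $f(y)\le f(x)$. The only cosmetic difference is that you argue directly with $\alpha=f(x)$ whereas the paper argues by contradiction, assuming $f(x)<f(w)$ and choosing $\alpha$ strictly between; your version is marginally cleaner and also dispatches the $f(x)=+\infty$ case explicitly.
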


\begin{proof}
	Let $x \succeq_{G} w $ then we must have $w \in \co \orbit (x)$ 
	so we may take positive scalars $\lambda_1, \dots, \lambda_k$ summing to unity and 
	$g_i \in G$ for $i=1,\dots, k$ such that $w = \sum_{i=1}^k \lambda_i g_i x$. 
	Now suppose $f(x) < f(w)$ then by taking $f(x) < \alpha < f(w)$ we have 
	$x \in \operatorname{lev}_{\alpha} f$ but $w \notin \operatorname{lev}_{\alpha} f$.
	But as $f(g_i x)  = f(x) < \alpha$ for all $i$ we have, via pseudo-convexity and the convexity of level sets, that $w \in \operatorname{lev}_{\alpha} f $, a contradiction. 
\end{proof}

\section{Permutation invariance and  compressed sensing}\label{sec:compressed}

In the area of sparse signal recovery and compressed sensing the goal is to reconstruct a signal with the minimum number of non zero elements in the solution vector.  This is measured 
by the $l_0$ norm that counts the number of non zero components of the vector. 
Very good solutions are obtained by instead adding a penalty term involving the 
$l_1$ norm, thus solving a discrete problem via a continuous approximation.
 The important role played by symmetries is with respect to  the sparsity constraint 
\begin{equation}\label{eq:sparsity}
 C_s := \{ x \mid \| x \|_0 \leq s\}.
\end{equation}
The sparsity constraint is invariant under the permutation of components of its vectorial elements. A basic compressed sensing problem formulation can be stated as follows:
\[
\min f(x)  \quad \text{ subject to } x \in C_s \cap B,
\]
where $f$ and  $B$ are permutation invariant, and $B$ is usually assumed to be convex. 
Many algorithms designed to solve this problem require the 
calculation of the Euclidean  projection 
$$
\| x - \Proj_{C_s \cap B} (x) \|_2 =  \min \{ \|x - y \|_2 \mid y \in C_s \cap B\}. 
$$
If this can be computed efficiently then one can use projected descent methods or similar optimisation techniques. The nonconvexity of $C_s$ means that the projection is not necessarily  unique.

Observe that the sparsity constraint $C_s$ is invariant (as a set in $\R^n$) with respect to the action of the coordinate permutation group $\Perm(n)$. Observe that $\Perm(n)$ is a finite reflection group that can be generated by mirrors that swap pairs of coordinates; we will always assume that its fundamental chamber corresponds to the nonincreasing reordering of coordinates.

In \cite{Beck2014} the authors define a type-2-symmetric set if it is not only permutation invariant but also invariant under sign changes. This type of symmetry is well known in group theory \cite{Niezgoda1998}. The fundamental chamber associated with this group corresponds to 
\begin{equation}\label{eq:typetwocham}
\fcham := \{x \in \R^n \mid x_1 \geq x_2 \geq \dots \geq x_n \geq0\}
\end{equation}
and $\widecheck x = |x|^{\downarrow}$. We denote this group by $\Perm_2$.

The intersection of $C_s$ with the fundamental chamber $\fcham$ of the type-2 symmetric group $\Perm_2$ is a convex set. In view of Theorem~\ref{thm:encompass} the study of projections on sparsity constraints hence reduces to the study of projections onto convex sets. We sharpen these statements in the following two results.

\begin{lemma}\label{lem:CsConvexIntersection} Let $s$ be a nonnegative integer, and let $\Perm_2$ be the type-2 symmetry group, with $\fcham$ being its fundamental chamber defined by \eqref{eq:typetwocham}. Let $B$ be a nonempty closed convex set invariant under $\Perm_2$. Then the intersection of $C_s\cap B$ (where $C_s$ is defined by \eqref{eq:sparsity}) with the fundamental chamber $\fcham$ is a convex set.
\end{lemma}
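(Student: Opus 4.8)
The plan is to reduce the statement to the elementary observation that, inside this particular fundamental chamber, the sparsity constraint becomes linear. First I would recall that $\fcham$ as defined in \eqref{eq:typetwocham} is a polyhedral convex cone, being cut out by the linear inequalities $x_1 \geq x_2 \geq \dots \geq x_n \geq 0$.

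The key step is to establish the identity
$$
C_s \cap \fcham = \{x \in \fcham \mid x_{s+1} = 0\},
$$
with the convention that the right-hand side is all of $\fcham$ when $s \geq n$. For the inclusion ``$\supseteq$'', if $x \in \fcham$ and $x_{s+1} = 0$, then the ordering $x_{s+1} \geq x_{s+2} \geq \dots \geq x_n \geq 0$ forces $x_{s+1} = \dots = x_n = 0$, so $x$ has at most $s$ nonzero components and lies in $C_s$. For ``$\subseteq$'', if $x \in \fcham \cap C_s$ but $x_{s+1} > 0$, then $x_1 \geq x_2 \geq \dots \geq x_{s+1} > 0$ gives at least $s+1$ nonzero components, contradicting $\|x\|_0 \leq s$; hence $x_{s+1} = 0$.

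Consequently $C_s \cap \fcham$ is the intersection of the convex cone $\fcham$ with the hyperplane $\{x \mid x_{s+1} = 0\}$, and is therefore convex. The set in question, $C_s \cap B \cap \fcham = (C_s \cap \fcham) \cap B$, is then an intersection of two convex sets (using the convexity of $B$), hence convex.

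I do not anticipate a genuine obstacle: once the sorted-coordinate description of $C_s \cap \fcham$ is in place, the rest is immediate. The only points needing care are the degenerate ranges of $s$ (for instance $s = 0$, where $C_0 = \{0\}$, and $s \geq n$, where $C_s = \R^n$), which the convention above absorbs, and the fact that the argument genuinely uses \emph{both} features of the chamber \eqref{eq:typetwocham} of $\Perm_2$ — the nonincreasing ordering of the coordinates and their nonnegativity — so it is essential that $\widecheck x = |x|^{\downarrow}$ here rather than merely a reordering.
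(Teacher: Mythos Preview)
Your proof is correct and rests on the same key observation as the paper's: for $x\in\fcham$, the condition $\|x\|_0\le s$ is equivalent to $x_{s+1}=\dots=x_n=0$, which linearises the sparsity constraint inside the chamber. The paper verifies convexity directly by checking that a convex combination stays in $C_s\cap B\cap\fcham$, whereas you package the same fact as the identity $C_s\cap\fcham=\{x\in\fcham\mid x_{s+1}=0\}$ and then intersect with $B$; the substance is the same.
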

\begin{proof}
	Let $x,y\in \fcham \cap C_s \cap B$, and let $z = \alpha x+ (1-\alpha )y$ with $\alpha \in (0,1)$. From the definition \eqref{eq:typetwocham} of $\fcham$, the first $s$ coordinates of both $x$ and $y$ are nonnegative, and the rest are zero. It is not difficult to observe that the same is true for $z$, hence, $z\in C_s$. Moreover, 
	$$
	z_{i+1} = \alpha x_{i+1} +(1-\alpha)y_{i+1} \geq \alpha x_i +(1-\alpha)y_i =  z_i \quad \forall i\in \{1,\dots, n-1\},
	$$
	hence, $z_i\in \fcham$. Clearly $z\in B$ due to the convexity of $B$. By the arbitrariness of our choice of $z$ we have $\co (C_s\cap \fcham) = C_s\cap \fcham$.
\end{proof}
 
In view of Lemma~\ref{lem:CsConvexIntersection} computation of projections on the sparsity constraints reduces to computing projections on a linear subspace. More precisely, we have the following result.

\begin{lemma}\label{lem:specialprojection} Let $x\in \fcham$, where $\fcham$ is the fundamental chamber of $\Perm_2$. Then 
$\Proj_{C_s\cap \fcham}= \{y\}$, where
$$
y_i = 0 \quad \forall i>s;\quad y_i = x_i \quad \forall i\leq s.
$$	
\end{lemma}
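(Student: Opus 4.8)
The plan is to show the claimed point $y$ is the unique minimiser of $\|x-z\|_2$ over $z \in C_s \cap \fcham$ by a direct elementary argument, exploiting the fact (Lemma~\ref{lem:CsConvexIntersection}, applied with $B = \R^n$) that $C_s \cap \fcham$ is convex, so the projection is indeed a singleton and it suffices to exhibit a candidate and verify the variational inequality, or simply to bound the distance from below. First I would record that any $z \in C_s \cap \fcham$ satisfies $z_1 \geq z_2 \geq \dots \geq z_n \geq 0$ together with $\|z\|_0 \leq s$; I claim the latter two conditions force $z_i = 0$ for all $i > s$. Indeed, if some $z_j \neq 0$ with $j > s$, then $z_1 \geq \dots \geq z_j > 0$ would give at least $j > s$ strictly positive (hence nonzero) coordinates, contradicting $\|z\|_0 \leq s$. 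So $C_s \cap \fcham$ is contained in the subset of $\fcham$ whose last $n-s$ coordinates vanish.

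Next I would split the squared distance along coordinates: for any feasible $z$,
\[
\|x - z\|_2^2 = \sum_{i \leq s} (x_i - z_i)^2 + \sum_{i > s} x_i^2 \geq \sum_{i > s} x_i^2,
\]
using $z_i = 0$ for $i > s$. The right-hand side is a constant independent of $z$, and equality holds precisely when $z_i = x_i$ for all $i \leq s$, i.e. when $z = y$ with $y$ as in the statement. It remains only to check that this $y$ is itself feasible, i.e. $y \in C_s \cap \fcham$: clearly $\|y\|_0 \leq s$ since only the first $s$ coordinates can be nonzero; and $y \in \fcham$ because $y_1 = x_1 \geq \dots \geq x_s = y_s \geq 0$ (as $x \in \fcham$ forces $x_s \geq x_{s+1} \geq \dots \geq 0$, so in particular $x_s \geq 0$), while $y_s \geq 0 = y_{s+1} = \dots = y_n$. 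Hence $y$ attains the lower bound and is the unique point doing so, so $\Proj_{C_s \cap \fcham}(x) = \{y\}$.

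The only mild subtlety — and the step I would state carefully rather than wave past — is the implication $z \in \fcham$, $\|z\|_0 \leq s \Rightarrow z_i = 0$ for $i > s$, since this is exactly where the ordering of the chamber is used to collapse the nonconvex cardinality constraint; everything after that is the separable-least-squares computation above. One could alternatively invoke Lemma~\ref{lem:CsConvexIntersection} to guarantee uniqueness a priori and then merely verify that $y$ satisfies $\langle x - y, z - y\rangle \leq 0$ for all feasible $z$, but the direct distance bound is cleaner and yields both existence and uniqueness of the minimiser simultaneously.
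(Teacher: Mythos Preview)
Your proof is correct. The paper's own argument is exactly the alternative you sketch in your final paragraph: it observes $y\in C_s\cap\fcham$, then verifies the variational inequality $\langle x-y,\,z-y\rangle\leq 0$ for all $z$ in the (convex, by Lemma~\ref{lem:CsConvexIntersection}) set $C_s\cap\fcham$, noting that in fact the inner product vanishes because $(x-y)_i=0$ for $i\leq s$ while $(z-y)_i=0$ for $i>s$. Both arguments rest on the same structural observation---that $z\in C_s\cap\fcham$ forces $z_i=0$ for $i>s$---which you state and prove explicitly while the paper leaves it implicit. Your direct lower bound on $\|x-z\|_2^2$ has the small advantage of delivering existence and uniqueness in one stroke without appealing to convexity, whereas the paper's route is marginally shorter once convexity is on the table.
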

\begin{proof}
First of all, observe that $y\in C_s\cap \fcham $. To prove that $y$ is indeed the projection of $x$ onto $S: = C_s\cap \fcham$, it is enough to show that 
$$
\langle x-y, z-y\rangle \leq 0 \qquad \forall\, z\in S.
$$
We have 
$$
(x-y)_i = 0 \quad \forall i\leq s; \qquad  (z-y)_i = 0\quad \forall i >s,
$$
hence,
$$
\langle x-y, z-y\rangle = 0 \qquad \forall\, z\in S.
$$
\end{proof}

\begin{lemma}\label{lem:specialsparcityprojection} Let $s$ be a nonnegative integer, and let $x\in \R^n$. Assume that $B$ is a nonempty closed convex set invariant under $\Perm_2$. Then the projection of $x$ onto $C_s\cap B$ can be computed as follows
$$
\Proj_{C_s\cap B}(x) = Q^{-1}C_{\Perm_2}(x) \Proj_{\fcham\cap C_s\cap B}(Q x), 
$$ 
where $Q\in \Perm_2$ is such that $Qx = |x|^\downarrow$, and $C_{\Perm_2}(x)$ is the stabiliser of $x$.
\end{lemma}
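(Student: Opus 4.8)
The plan is to push the computation into the fundamental chamber using the two structural results already at hand, and then to recognise the chamber piece as convex via Lemma~\ref{lem:CsConvexIntersection}. First I would set $C := C_s \cap B$ and record that it is a nonempty closed $\Perm_2$-invariant set: it is closed and invariant because $C_s$ and $B$ are, and it is nonempty because $0 \in C_s$ while $0 \in B$ (any nonempty $\Perm_2$-invariant convex set contains the midpoint of $x$ and $-x$, hence the origin). Writing $w := Qx = |x|^{\downarrow} = \widecheck x \in \fcham$, Proposition~\ref{lem:ProjAction} applied with $g = Q$ gives $Q\,\Proj_C(x) = \Proj_C(Qx)$, hence $\Proj_C(x) = Q^{-1}\Proj_C(w)$, and it remains to describe $\Proj_C(w)$ for the chamber point $w$.

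Next I would apply Theorem~\ref{thm:encompass}(i) at $w$, which gives $\Proj_C(w) = C_{\Perm_2}(w)\bigl[\Proj_C(w)\cap\fcham\bigr]\neq\emptyset$, and then argue that $\Proj_C(w)\cap\fcham = \Proj_{C\cap\fcham}(w)$. The inclusion of the left side in the right is immediate, and for the reverse one notes that $\dist(w,C)\le\dist(w,C\cap\fcham)$ trivially, while the opposite inequality holds because the set $\Proj_C(w)\cap\fcham$ --- nonempty by Theorem~\ref{thm:encompass}(i) --- consists of points of $C\cap\fcham$ attaining $\dist(w,C)$; hence the two distances agree and the projection sets coincide. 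By Lemma~\ref{lem:CsConvexIntersection}, $C\cap\fcham = \fcham\cap C_s\cap B$ is closed and convex, so $\Proj_{\fcham\cap C_s\cap B}(w)$ is a single point (when $B$ is trivial it is the point of Lemma~\ref{lem:specialprojection}, but we do not need the explicit form here). Assembling the pieces gives $\Proj_{C_s\cap B}(x) = Q^{-1}\,C_{\Perm_2}(Qx)\,\Proj_{\fcham\cap C_s\cap B}(Qx)$.

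The last step, and the one I expect to be the only real obstacle, is the bookkeeping needed to phrase the answer through the stabiliser of $x$ rather than that of $Qx$. Since $w = Qx$ is the $Q$-image of $x$, the stabilisers are conjugate, $C_{\Perm_2}(Qx) = Q\,C_{\Perm_2}(x)\,Q^{-1}$, so $Q^{-1}C_{\Perm_2}(Qx) = C_{\Perm_2}(x)\,Q^{-1}$ as families of transformations, and the formula becomes $\Proj_{C_s\cap B}(x) = C_{\Perm_2}(x)\,Q^{-1}\,\Proj_{\fcham\cap C_s\cap B}(Qx)$. Because $Q^{-1}$ and $C_{\Perm_2}(x)$ do not commute, the placement of the factor $Q^{-1}$ relative to the stabiliser matters, and care is needed to state the identity correctly --- the cleanest equivalent forms being $Q^{-1}C_{\Perm_2}(Qx)\,\Proj_{\fcham\cap C_s\cap B}(Qx)$ and $C_{\Perm_2}(x)\,Q^{-1}\,\Proj_{\fcham\cap C_s\cap B}(Qx)$.
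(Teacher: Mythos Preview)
Your approach is essentially the same as the paper's: both combine Proposition~\ref{lem:ProjAction} with Theorem~\ref{thm:encompass}(i) and then identify $\Proj_C(\widecheck x)\cap\fcham$ with $\Proj_{C\cap\fcham}(\widecheck x)$, invoking Lemma~\ref{lem:CsConvexIntersection} for convexity. Your justification of this last identification is in fact slightly tidier than the paper's, since you obtain the needed nonemptiness of $\Proj_C(\widecheck x)\cap\fcham$ directly from Theorem~\ref{thm:encompass}(i), whereas the paper reproves it via the rearrangement inequality $\langle |z|^{\downarrow},\widecheck x\rangle\ge\langle z,\widecheck x\rangle$.

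On your flagged ``obstacle'': you are right that the argument (both yours and the paper's) naturally produces $Q^{-1}C_{\Perm_2}(\widecheck x)\,\Proj_{\fcham\cap C_s\cap B}(Qx)$, equivalently $C_{\Perm_2}(x)\,Q^{-1}\,\Proj_{\fcham\cap C_s\cap B}(Qx)$, via the conjugation $C_{\Perm_2}(Qx)=Q\,C_{\Perm_2}(x)\,Q^{-1}$. The paper's own proof simply says ``the result follows directly from Theorem~\ref{thm:encompass}(i) and Lemma~\ref{lem:ProjAction}'' and does not carry out any further rewriting of the stabiliser, so the appearance of $C_{\Perm_2}(x)$ rather than $C_{\Perm_2}(\widecheck x)$ in the displayed formula is a slip in the statement rather than something established in the proof; your two ``cleanest equivalent forms'' are the correct ones.
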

\begin{proof} Observe that  $\widecheck x = Q x$. By Lemma~\ref{lem:CsConvexIntersection} the intersection $C_s\cap \fcham \cap B$ is a convex set, so we use the notation $\{p\} = \Proj_{C_s\cap B \cap \fcham}(\widecheck x)$. Our goal is to show that $\{p\} = \Proj_{C_s\cap B}(\widecheck x)\cap \fcham$. Then the result follows directly from Theorem~\ref{thm:encompass}~(i) and Lemma~\ref{lem:ProjAction}. 
	
Observe that the inclusion $\Proj_{C_s\cap B }(\widecheck x) \cap\fcham \subset \Proj_{C_s \cap B\cap \fcham }(\widecheck x)$ is trivial, and in view of Lemma~\ref{lem:CsConvexIntersection} the set on the right hand side is a singleton, and we only need to show that $\Proj_{C_s \cap B}(\widecheck x)\cap \fcham $ is nonempty to demonstrate the reverse inclusion. Assume the contrary, then there exists a point $z$ in $\Proj_{C_s\cap B }(\widecheck x)$ such that $\|z-x\|<\||z|^\downarrow-x\|$. Observe that by our assumption $z\notin \fcham$, hence, $z':= |z|^\downarrow \neq z$, but $z'\in C_s \cap B\cap \fcham$. It is straightforward that since $x = |x|^\downarrow$,
$$
\langle |z|,x\rangle \geq \langle z,x\rangle;\qquad \langle |z|^\downarrow,x\rangle \geq \langle |z|,x\rangle,
$$
hence, 
$$
\|z-x\| \geq \||z|^\downarrow - x\|, 
$$
and hence our assumption is wrong.
\end{proof}

The major results of \cite[Section~4]{Beck2014} deal with the projections onto the intersections of $G$-invariant sets and sparsity constraints. Observe that in the case when $B$ is a close convex set invariant with respect to $\Perm_2$, we have obtained the explicit representation of the projection onto $C_s\cap B$.

\section*{Acknowledgements} We are grateful to our colleagues Bill Moran and Lawrence Reeves for their patient explanations on Coxeter groups and representation theory.

\bibliographystyle{plain}
\bibliography{references}

\end{document}